\documentclass[11pt,reqno]{amsart}
\usepackage{tikz}
\textheight    23cm
\textwidth     15.cm
\addtolength{\textheight}{-0.75in}
\oddsidemargin   .4cm
\evensidemargin  .4cm
\parskip 6pt
\usepackage{subfig}
\usepackage{epstopdf}
\usepackage{epsfig}
\usepackage{math}
\graphicspath{{./figures/}}
\usepackage{tikz}
\usetikzlibrary{shapes,arrows}
\tikzstyle{decision} = [diamond, draw, fill=blue!20, 
    text width=4.5em, text badly centered, node distance=3cm, inner sep=0pt]
\tikzstyle{block} = [rectangle, draw, fill=blue!20, 
    text width=5em, text centered, rounded corners, minimum height=4em]
\tikzstyle{line} = [draw, -latex']
\tikzstyle{cloud} = [draw, ellipse,fill=red!20, node distance=3cm,
    minimum height=2em]

\newcommand{\ts}{\mathsf{T}}

\usetikzlibrary{positioning}
\tikzset{main node/.style={circle,fill=blue!20,draw,minimum size=1cm,inner sep=0pt},  }

\begin{document}
\title[]{Entropy dissipation via Information Gamma calculus: Non-reversible stochastic differential equations}
\author[Feng]{Qi Feng}
\author[Li]{Wuchen Li}
\email{qif@usc.edu;}
\email{wuchen@mailbox.sc.edu;}
\address{Department of Mathematics, University of Southern California, Los Angeles, CA 90089;}
\address{Department of Mathematics, University of South Carolina,  Columbia, SC 29208.}

\keywords{Information Gamma calculus; Entropy dissipation; Poincar{\'e} inequality; Non-reversible stochastic dynamics.} 
\thanks{Wuchen Li is supported by University of South Carolina, start--up funding}
\begin{abstract}
We formulate explicit bounds to guarantee the exponential dissipation for some non-gradient stochastic differential equations towards their invariant distributions. Our method extends the connection between Gamma calculus and Hessian operators in $L^2$--Wasserstein space. In details, we apply Lyapunov methods in the space of probabilities, where the Lyapunov functional is chosen as the relative Fisher information. We derive the Fisher information induced Gamma calculus to handle non-gradient drift vector fields. We obtain the explicit dissipation bound in terms of $L_1$ distance and formulate the non-reversible Poincar{\'e} inequality. An analytical example is provided for a non-reversible Langevin dynamic. 
\end{abstract}
\maketitle
\section{Introduction}
The convergence behaviors of non-reversible stochastic dynamical systems towards their invariant distributions play essential roles in probability \cite{AE, BE2008, BGL, CG2017,Guillin, Hwang1993, Hwang2005, KM2012, LNP2013, MO}, functional inequalities \cite{C1, Gross} and dynamical systems \cite{Legoll, Luc} with applications in Bayesian sampling problems \cite{non-reversible}. Consider a stochastic differential equation (SDE)
\begin{equation*} 
dX_t=b(X_t)dt+\sqrt{2} dB_t,
\end{equation*}
where $X_t\in \mathbb{R}^d$, $b\colon \mathbb{R}^d\rightarrow\mathbb{R}^d$ is a given smooth drift function and $B_t$ is a standard Brownian motion in $\mathbb{R}^d$. 

If the drift vector $b$ is a gradient vector of a given potential function $U$, i.e., $b=\nabla U$, the above SDE is known as the over-damped Langevin dynamics. It is reversible with many classical convergence properties \cite{BE, Villani2009_optimal}. A well-known fact is that there are several Lyapunov functionals such as relative entropy and relative Fisher information between the initial density and the associated invariant distribution. Under suitable conditions of $U$, one shows that these functionals decay exponentially fast, namely the entropy dissipation \cite{MV}. Mathematically, one way of obtaining these conditions is the Bakry--{\'E}mery iterative Gamma calculus \cite{BE}. This calculus can be understood as a direct calculation method to obtain dissipation rates. Nowadays, this Gamma calculus is connected with Hessian operators in $L^2$--Wasserstein space \cite{OV, Villani2009_optimal}. If the drift vector $b$ is not a gradient vector field, this SDE is non-reversible. Here \cite{AC, ACJ, FJ} have studied the entropy dissipation results with explicit rates for non-gradient stochastic dynamical systems. In particular, the Arnold-Carlen tensor \cite{AC} has been formulated to guarantee the exponential convergence of non-reversible SDEs, with extensions in trajectory and path space \cite{FJ}. 

This paper provides the other entropy dissipation results for non-reversible SDEs with an explicitly known invariant distribution. We also choose the relative Fisher information as the Lyapunov functional for these non-reversible SDEs. Following the Hessian operators of KL divergence in $L^2$--Wasserstein space, we formulate a new iterative Gamma calculus, which  addresses those non-gradient drift vector fields. And we derive sufficient conditions to guarantee the exponential convergence in term of $L_1$ distance with explicit bounds. Following the proposed Gamma calculus, we derive the Poincar{\'e} inequality for non-reversible SDEs. An example of an explicit rate is provided, for $b=-(\mathbb{I}+\mathbb{J})\nabla U$, where $\mathbb{J}$ is a given constant skew-symmetric matrix. 

In literature, several methods address convergence behaviors of non--reversible stochastic dynamical systems. A celebrated result is known as the Foster-Lyapunov criterion \cite{MT}. It requires to construct a Lyapunov function in a spatial domain from the SDE's generator function. They can prove the convergence in the sense of semi-group. And there are several other dissipation results \cite{AM2019,BGH2019}. In particular, \cite{AC, ACJ} formulate the entropy method and condition for nonsymmetry Fokker-Planck equations. They derive the Arnold-Carlen tensor for generalized Bakry--{\'E}mery condition, which can explicitly guarantee the convergence behavior globally in space. Compared to existing works, we provide the other explicit convergence bound for some non-reversible stochastic systems in the sense of relative entropy and relative Fisher information. Our results derive dissipation rates by introducing {\em information Gamma calculus}, which works for non-gradient Fokker-Planck equations. Besides, our derivation extends the connection between geometric calculations in $L^2$--Wasserstein space and Gamma calculus \cite{Li2018_geometrya,OV,Villani2009_optimal}, which goes beyond gradient drift vector fields. See related generalized Gamma calculus in \cite{FL, LiG2}.

This paper is organized as follows. In section \ref{sec2}, we present the main results of this paper. We formulate the explicit condition for the exponential decay of Fisher information. And we derive the convergence result in terms of relative entropy, $L^2$--Wasserstein distance and $L_1$ distance. The log-Sobolev inequalities and Poincar{\'e} inequalities for non-gradient SDEs are derived. All proofs and derivations are shown in section \ref{sec3} and \ref{sec31}. An analytical example is presented in section \ref{sec4}. 

\section{Main result}\label{sec2}
In this section, we present the main result of this paper. 

\subsection{Notations} 
Consider a SDE in a Euclidean connected compact domain $\Omega\subset\mathbb{R}^d$ by  \begin{equation}\label{a}
d X_t=b(X_t)dt+\sqrt{2}dB_t,
\end{equation}
where $X_t\in\Omega$ is a stochastic process, $B_t$ is a standard Brownian motion in $\Omega$, and $b\colon\mathbb{R}^d\rightarrow\mathbb{R}^d$ is a given smooth drift vector field. Suppose $\pi\in C^{\infty}(\Omega)$ is a given known invariant distribution for SDE \eqref{a}, then vector field $b$ satisfies  
\begin{equation*}
b(x)=\nabla\log\pi(x)-\gamma(x),
\end{equation*}
where $\gamma(x)\colon \Omega\rightarrow\mathbb{R}^d$ is a smooth vector field satisfying
\begin{equation*}
\nabla\cdot(\pi(x)\gamma(x))=0.     
\end{equation*}
The stochastic process \eqref{a} is reversible if and only if $\gamma(x)=0$; see \cite{Pav}. 

The Fokker-Planck equation of SDE \eqref{a} satisfies 
\begin{equation}\label{FPE}
\partial_tp(t,x)=-\nabla\cdot(p(t,x) b(x))+\Delta p(t,x),
\end{equation}
with Neumann boundary conditions and a given initial condition $p_0=p(0,x)$, such that 
\begin{equation*}
p_0\in L_1(\Omega),\quad p_0(x)\geq 0,\quad \int_\Omega p_0(x)dx=1.
\end{equation*}
Here $p_t=p(t,\cdot)$ is the probability density function of $X_t$. From the uniform
ellipticity condition, there exists a classical solution for \eqref{FPE}, where $p_t\in C^\infty(\mathbb{R}_+,\Omega)$ and $\lim_{t\rightarrow 0}p(t,x)=p_0(x)$. Here we can see that $\pi$ is the invariant distribution, since
\begin{equation*}
-\nabla\cdot(\pi(x)b(x))+\Delta \pi(x)=0.
\end{equation*}

The goal of this paper is to study the convergence behavior of probability density function $p(t,x)$ towards the invariant distribution $\pi(x)$. Here we are interested in the case that the invariant distribution $\pi$ has an analytical formula. 

To do so, we introduce the following Lyapunov functional in probability density space to study equation \eqref{FPE}. Given a probability density function $p$, denote a functional $\mathcal{I}$ to measure the difference between $p$ and $\pi$ by 
\begin{equation*}
\mathcal{I}(p\|\pi)=\int_\Omega \|\nabla \log\frac{p(x)}{\pi(x)}\|^2p(x)dx.
\end{equation*}
Here if $p=\pi$, then $\log \frac{p}{\pi}=0$ and $\mathcal{I}(p\|\pi)=0$. 
In literature, the functional $\mathcal{I}$ is known as the {\em relative Fisher information functional}. 
Shortly, we formulate the explicit condition to guarantee the exponential decay of relative Fisher information $\mathcal{I}$ along the Fokker-Planck equation \eqref{FPE}. And we establish a Poincar{\'e} inequality for non-reversible SDEs. 
\subsection{Main results}
We are now ready to present the condition to characterize the exponential convergence result of relative Fisher information. 

We first present a matrix function, whose lower bound of smallest eigenvalue will guarantee the convergence rate. Denote a vector function $\gamma=(\gamma_i)_{i=1}^d\colon \Omega\rightarrow\mathbb{R}^d$ by
\begin{equation}\label{G}
\gamma_i(x)=\frac{\partial}{\partial{x_i}}\log \pi(x)-b_i(x). 
\end{equation}
\begin{definition}
Denote a matrix function $\mathfrak{R}=(\mathfrak{R}_{ij})_{1\leq i,j\leq d}\colon \Omega\rightarrow \mathbb{R}^{d\times d}$, such that
\begin{equation}\label{R}
\mathfrak{R}(x)=-\nabla^2\log \pi(x)-A,
\end{equation}
where $A=(A_{ij})_{1\leq i,j\leq d}\colon \Omega \rightarrow\mathbb{R}^{d\times d}$ is a symmetric matrix function satisfying 
\begin{equation*}
A_{ij}(x)=\begin{cases}
\frac{3-2d}{8}\gamma_i(x)^2-\frac{1}{8}\sum_{k=1}^d\gamma_k(x)^2+\gamma_i(x)\frac{\partial}{\partial x_i}\log \pi(x)&\textrm{if $i=j$;}\\
\frac{3-2d}{8}\gamma_i(x)\gamma_j(x)+\frac{1}{2}\big(\gamma_i(x)\frac{\partial}{\partial x_j}\log \pi(x)+\gamma_j(x)\frac{\partial}{\partial x_i}\log \pi(x)\big)&\textrm{if $i\neq j$.}
\end{cases}
\end{equation*}
In other words, \begin{equation*}
\mathfrak{R}_{ij}(x)=
-\frac{\partial^2}{\partial x_i\partial x_j}\log \pi(x)-A_{ij}(x), 
\end{equation*}
for any $1\leq i, j\leq d$.
\end{definition}

We next present the main result of this paper. In a word, if the smallest eigenvalue of $\mathfrak{R}(x)$ is bounded below by a positive constant, then the relative Fisher information converges to zero 
exponentially fast.   
\begin{theorem}[Fisher information dissipation]\label{thm1}
Suppose that there exists a constant $\lambda> 0$, such that 
\begin{equation}\label{C}
\mathfrak{R}(x) \succeq \lambda \mathbb{I}, \quad \textrm{for any $x\in\Omega$},
\end{equation}
where $\mathbb{I}\in\mathbb{R}^{d\times d}$ is an identity matrix, then 
\begin{equation*}
\mathcal{I}(p_t\|\pi)\leq  e^{-2\lambda t}\mathcal{I}(p_0\|\pi),
\end{equation*}
where $p_t$ is the solution of Fokker-Planck equation \eqref{FPE} and $p_0$ is the given initial distribution. 
\end{theorem}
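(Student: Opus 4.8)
\medskip
\noindent\emph{Proof plan.} The plan is to run a Lyapunov (entropy--dissipation) argument in the space of probability densities: one shows that along the Fokker--Planck equation \eqref{FPE},
\begin{equation*}
\frac{d}{dt}\,\mathcal{I}(p_t\|\pi)\ \leq\ -2\lambda\,\mathcal{I}(p_t\|\pi),
\end{equation*}
after which Gr{\"o}nwall's inequality yields $\mathcal{I}(p_t\|\pi)\leq e^{-2\lambda t}\mathcal{I}(p_0\|\pi)$. As a preliminary reduction, if $\mathcal{I}(p_0\|\pi)=+\infty$ there is nothing to prove, so we assume it finite; combined with the interior smoothness of $p_t$ (from the uniform ellipticity quoted above) and the compactness of $\Omega$, this legitimizes differentiating under the integral sign and the several integrations by parts used below, all of whose boundary contributions vanish by the Neumann conditions.

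First I would change the unknown. Set $g_t:=\log\frac{p_t}{\pi}$, so that $\mathcal{I}(p_t\|\pi)=\int_\Omega\|\nabla g_t\|^2\,p_t\,dx$. Using the structural identity $\nabla\cdot(\pi\gamma)=0$, one finds that $g_t$ solves
\begin{equation*}
\partial_t g_t\ =\ Lg_t+\gamma\cdot\nabla g_t+\|\nabla g_t\|^2,\qquad L:=\Delta+\nabla\log\pi\cdot\nabla,
\end{equation*}
where $L$ is symmetric and $\gamma\cdot\nabla$ antisymmetric with respect to $\pi$; this splitting into a reversible and a $\pi$--preserving non-reversible part is what underlies the connection, exploited throughout, between Gamma calculus and Hessians in $L^2$--Wasserstein space. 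Substituting this evolution into $\frac{d}{dt}\int_\Omega\|\nabla g_t\|^2 p_t\,dx$ and integrating by parts repeatedly, one rewrites $-\tfrac12\frac{d}{dt}\mathcal{I}(p_t\|\pi)$ as an integral against $p_t$ of three kinds of terms: (i) the Hessian (Bochner) square $\|\nabla^2 g_t\|^2=\sum_{i,j=1}^d\big(\tfrac{\partial^2 g_t}{\partial x_i\partial x_j}\big)^2$; (ii) the quadratic form $(\nabla g_t)^{\top}(-\nabla^2\log\pi)(\nabla g_t)$; and (iii) extra terms generated by the non-gradient field, built from $\gamma$, $\nabla\gamma$, $\nabla\log\pi$, $\nabla g_t$ and $\nabla^2 g_t$, and \emph{not} sign-definite. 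When $\gamma\equiv0$ only (i)--(ii) remain --- the classical Bakry--{\'E}mery Fisher--information identity $\frac{d}{dt}\mathcal{I}(p_t\|\pi)=-2\int_\Omega\big(\|\nabla^2 g_t\|^2+(\nabla g_t)^{\top}(-\nabla^2\log\pi)(\nabla g_t)\big)p_t\,dx$ --- and the theorem reduces to the reversible case; the content of the statement lies in controlling (iii).

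The core is the \emph{information Gamma calculus}. Since (iii) is indefinite, I would (a) integrate by parts once more --- again invoking $\nabla\cdot(\pi\gamma)=0$ --- to eliminate $\nabla\gamma$ and to trade the couplings between $\gamma$ and $\nabla^2 g_t$ for lower-order expressions, and (b) complete the square in the space of symmetric $d\times d$ matrices, writing the whole integrand as
\begin{equation*}
\big\|\nabla^2 g_t+M_t\big\|^2+(\nabla g_t)^{\top}\,\mathfrak{R}(x)\,(\nabla g_t),
\end{equation*}
where $M_t$ is a symmetric matrix assembled from $\nabla g_t$, $\gamma$ and $\nabla\log\pi$, namely a linear combination with numerical coefficients of $\nabla g_t\otimes\nabla g_t$, $\|\nabla g_t\|^2\,\mathbb{I}$, $\gamma\otimes\nabla g_t+\nabla g_t\otimes\gamma$, and $\nabla\log\pi\otimes\nabla g_t+\nabla g_t\otimes\nabla\log\pi$. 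The coefficients of $M_t$ are pinned down by requiring that, modulo integration by parts against $p_t$, the expansion $\|\nabla^2 g_t\|^2+2\langle\nabla^2 g_t,M_t\rangle+\|M_t\|^2$ absorb $\|\nabla^2 g_t\|^2$ together with every indefinite and higher-order term, leaving behind only a quadratic form $(\nabla g_t)^{\top}(\cdots)(\nabla g_t)$. This is an over-determined linear system, and balancing the pieces that meet the identity--matrix component $\|\nabla g_t\|^2\,\mathbb{I}$ of $M_t$ is possible only with the dimension--dependent weights $\tfrac{3-2d}{8}$ and $-\tfrac18$ (the trace $\mathrm{Tr}\,\mathbb{I}=d$ entering here) --- which is exactly how those coefficients show up in the matrix $A$ of \eqref{R}, so that $\mathfrak{R}=-\nabla^2\log\pi-A$ as defined. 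Since $\|\nabla^2 g_t+M_t\|^2\geq 0$ pointwise, this gives
\begin{equation*}
-\tfrac12\frac{d}{dt}\mathcal{I}(p_t\|\pi)\ \geq\ \int_\Omega (\nabla g_t)^{\top}\,\mathfrak{R}(x)\,(\nabla g_t)\,p_t\,dx.
\end{equation*}

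Finally, the hypothesis \eqref{C}, $\mathfrak{R}(x)\succeq\lambda\mathbb{I}$, gives the pointwise bound $(\nabla g_t)^{\top}\mathfrak{R}(x)(\nabla g_t)\geq\lambda\|\nabla g_t\|^2$, whence $\frac{d}{dt}\mathcal{I}(p_t\|\pi)\leq-2\lambda\int_\Omega\|\nabla g_t\|^2 p_t\,dx=-2\lambda\,\mathcal{I}(p_t\|\pi)$, and Gr{\"o}nwall's inequality finishes the argument. I expect the main obstacle to be steps (a)--(b): one must track without error every term produced by differentiating $\mathcal{I}(p_t\|\pi)$ and by the repeated integrations by parts mixing $\nabla^2 g_t$, $\nabla g_t$, $\gamma$, $\nabla\gamma$ and $\nabla\log\pi$ (using $\nabla\cdot(\pi\gamma)=0$ at each step), and then \emph{guess the right symmetric matrix to subtract} --- i.e.\ discover that the leftover quadratic form is governed precisely by $-\nabla^2\log\pi-A$ with $A$ as in \eqref{R} --- so that the remainder closes up into a genuine perfect square; the dimensional coefficients in $A$ are the tell-tale sign that this matching is delicate.
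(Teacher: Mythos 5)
Your proposal follows essentially the same route as the paper: differentiate the relative Fisher information along \eqref{FPE}, use $\nabla\cdot(\pi\gamma)=0$ to reduce the dissipation to the pointwise quantity $\Gamma_2+\Gamma_{\mathcal I}$, complete the square into $\|\mathfrak{Hess}f\|_{\mathrm{F}}^2+\mathfrak{R}(\nabla f,\nabla f)$, and conclude by Gr\"onwall, the only cosmetic difference being that you evolve $g_t=\log(p_t/\pi)$ directly rather than splitting $\partial_t p_t=\tilde L^*p_t+\nabla\cdot(p_t\gamma)$. One small correction to your ansatz: the matrix added to $\nabla^2 g_t$ in the completed square is $-\tfrac14\bigl(\gamma\otimes\nabla g_t+\nabla g_t\otimes\gamma\bigr)+\tfrac12(\gamma,\nabla g_t)\,\mathbb{I}$, so the isotropic piece is $(\gamma,\nabla g_t)\,\mathbb{I}$ rather than $\|\nabla g_t\|^2\,\mathbb{I}$ (only the former produces the required cross term $\Delta g_t\,(\nabla g_t,\gamma)$), and it is the square of this isotropic piece summed over the $d$ diagonal entries that generates the dimension-dependent coefficients in $A$.
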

We postpone the proof of Theorem \ref{thm1} in next section. Using Theorem \ref{thm1}, we prove the classical log-Sobolev inequality under the condition \eqref{C}. Define the 
Kullback--Leibler (KL) divergence, also named relative entropy, between $p$ and $\pi$ by 
\begin{equation*}
\mathrm{D}_{\mathrm{KL}}(p\|\pi)=\int_\Omega p(x)\log \frac{p(x)}{\pi(x)}dx.     
\end{equation*}
\begin{corollary}\label{col2}
Assume condition \eqref{C} holds. Then the log-Sobolev inequality with constant $\lambda$ (LSI($\lambda$)) holds
\begin{equation*}
\mathrm{D}_{\mathrm{KL}}(p\|\pi) \leq \frac{1}{2\lambda}\mathcal{I}(p\|\pi),
\end{equation*}
for any smooth probability density function $p$. 
\end{corollary}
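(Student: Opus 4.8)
\emph{Proof proposal.} The plan is to run the classical ``integrate the dissipation in time'' argument, now for the non-reversible flow. If $\mathcal{I}(p\|\pi)=\infty$ there is nothing to prove, so fix a smooth probability density $p$ with $\mathcal{I}(p\|\pi)<\infty$ and let $p_t$ solve the Fokker-Planck equation \eqref{FPE} with $p_0=p$. The first step is to establish the entropy dissipation identity
\begin{equation*}
\frac{d}{dt}\mathrm{D}_{\mathrm{KL}}(p_t\|\pi)=-\mathcal{I}(p_t\|\pi).
\end{equation*}
To obtain this, rewrite \eqref{FPE} using $b=\nabla\log\pi-\gamma$ together with $\Delta p_t=\nabla\cdot(p_t\nabla\log p_t)$, so that
$\partial_t p_t=\nabla\cdot\big(p_t\nabla\log\tfrac{p_t}{\pi}\big)+\nabla\cdot(p_t\gamma)$. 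Differentiate $\mathrm{D}_{\mathrm{KL}}(p_t\|\pi)=\int_\Omega p_t\log\tfrac{p_t}{\pi}\,dx$ in $t$ (using $\int_\Omega\partial_t p_t\,dx=0$), and integrate by parts with the Neumann-type boundary conditions. The gradient term produces $-\mathcal{I}(p_t\|\pi)$, while the drift contribution equals $-\int_\Omega \pi\gamma\cdot\nabla\tfrac{p_t}{\pi}\,dx=\int_\Omega \tfrac{p_t}{\pi}\,\nabla\cdot(\pi\gamma)\,dx=0$, since $\nabla\cdot(\pi\gamma)=0$. In other words, the skew (non-reversible) part drops out of the entropy decay exactly.

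The second step combines this identity with Theorem \ref{thm1}. Since condition \eqref{C} is assumed, Theorem \ref{thm1} gives $\mathcal{I}(p_t\|\pi)\le e^{-2\lambda t}\mathcal{I}(p_0\|\pi)$ for all $t\ge 0$. Integrating the dissipation identity from $0$ to $T$ and letting $T\to\infty$,
\begin{equation*}
\mathrm{D}_{\mathrm{KL}}(p_0\|\pi)-\lim_{T\to\infty}\mathrm{D}_{\mathrm{KL}}(p_T\|\pi)=\int_0^\infty \mathcal{I}(p_t\|\pi)\,dt\le \mathcal{I}(p_0\|\pi)\int_0^\infty e^{-2\lambda t}\,dt=\frac{1}{2\lambda}\mathcal{I}(p_0\|\pi).
\end{equation*}
It then remains to argue that $\lim_{T\to\infty}\mathrm{D}_{\mathrm{KL}}(p_T\|\pi)=0$. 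The relative entropy $t\mapsto \mathrm{D}_{\mathrm{KL}}(p_t\|\pi)$ is nonincreasing and nonnegative, hence convergent; since its derivative $-\mathcal{I}(p_t\|\pi)$ is integrable on $[0,\infty)$ and $\mathcal{I}(p_t\|\pi)\to 0$, the forced convergence $p_t\to\pi$ on the compact domain $\Omega$ (via the Poincaré/compactness structure together with the uniform ellipticity assumed after \eqref{FPE}) yields $\mathrm{D}_{\mathrm{KL}}(p_T\|\pi)\to 0$. Substituting $p_0=p$ gives $\mathrm{D}_{\mathrm{KL}}(p\|\pi)\le\frac{1}{2\lambda}\mathcal{I}(p\|\pi)$, i.e. $\mathrm{LSI}(\lambda)$.

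The main obstacle is analytic bookkeeping rather than a new idea: one must justify differentiation under the integral sign and the integrations by parts (which rely on $p_t\in C^\infty$ and the boundary conditions), and — most delicately — the limit $\mathrm{D}_{\mathrm{KL}}(p_T\|\pi)\to 0$ as $T\to\infty$. On a compact domain with the stated uniform ellipticity this is standard (the Fokker-Planck semigroup is ergodic and $p_t$ converges smoothly to $\pi$), so the estimate closes cleanly; the genuinely structural point, handled in the first step, is that $\nabla\cdot(\pi\gamma)=0$ makes the non-gradient part of $b$ invisible to the entropy dissipation.
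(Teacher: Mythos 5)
Your proposal is correct and follows essentially the same route as the paper: both first establish the dissipation identity $\frac{d}{dt}\mathrm{D}_{\mathrm{KL}}(p_t\|\pi)=-\mathcal{I}(p_t\|\pi)$ (with the non-gradient contribution vanishing because $\nabla\cdot(\pi\gamma)=0$), then combine it with the exponential decay of $\mathcal{I}$ from Theorem \ref{thm1} and integrate over $[0,\infty)$. If anything, you are more explicit than the paper about justifying the limit $\mathrm{D}_{\mathrm{KL}}(p_T\|\pi)\to 0$, which the paper leaves implicit.
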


Using the above convergence result and log-Sobolev inequalities, one can derive different decay results under other functionals.
\begin{corollary}\label{col3}
Assume condition \eqref{C} holds. Then there are following decay results. 
\begin{itemize}
\item[(i)] {KL divergence decay:}
\begin{equation*}
\mathrm{D}_{\mathrm{KL}}(p_t\|\pi)\leq  \mathrm{D}_{\mathrm{KL}}(p_0\|\pi)e^{-2\lambda t}.
\end{equation*}
\item[(ii)]{Wasserstein-2 distance decay:}
\begin{equation*}
W(p_t,\pi)\leq \sqrt{\frac{2}{\lambda}\mathrm{D}_{\mathrm{KL}}(p_0\|\pi)}e^{-\lambda t}.
\end{equation*}
Here $W(p_t, \pi)$ is the Wasserstein-2 distance between $p_t$ and $\pi$, defined by 
\begin{equation*}
W(p_t, \pi)^2=\inf_{\pi}\int_{\Omega}\int_{\Omega}\|x-y\|^2\Pi(x,y)dxdy,
\end{equation*}
where the infimum is taken among all joint measures $\Pi$ with marginals $p_t$, $\pi$. 
\item[(iii)]{$L^1$ distance decay:}
\begin{equation*}
\int_\Omega \|p(t,x)-\pi(x)\| dx\leq \sqrt{2  \mathrm{D}_{\mathrm{KL}}(p_0\|\pi) }e^{-\lambda t}.
\end{equation*}
\end{itemize}
\end{corollary}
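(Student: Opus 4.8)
The three estimates will all be deduced from a single differential identity together with Corollary~\ref{col2} and two classical comparison inequalities. The first step is the entropy dissipation identity. Writing $b=\nabla\log\pi-\gamma$ and $h_t=p_t/\pi$, the Fokker--Planck equation \eqref{FPE} takes the form $\partial_tp_t=\nabla\cdot\big(p_t\nabla\log h_t\big)+\nabla\cdot(p_t\gamma)$. Differentiating $\mathrm{D}_{\mathrm{KL}}(p_t\|\pi)=\int_\Omega p_t\log h_t\,dx$, using $\int_\Omega\partial_tp_t\,dx=0$, and integrating by parts with the Neumann (no--flux) boundary conditions, I obtain
\begin{equation*}
\frac{d}{dt}\mathrm{D}_{\mathrm{KL}}(p_t\|\pi)=-\int_\Omega p_t\,\|\nabla\log h_t\|^2\,dx-\int_\Omega p_t\,\gamma\cdot\nabla\log h_t\,dx.
\end{equation*}
The only place the non--gradient structure enters is the last term, and it vanishes: since $p_t\nabla\log h_t=\pi\nabla h_t$, another integration by parts gives $-\int_\Omega\pi\gamma\cdot\nabla h_t\,dx=\int_\Omega h_t\,\nabla\cdot(\pi\gamma)\,dx=0$, by the defining property $\nabla\cdot(\pi\gamma)=0$. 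Hence $\frac{d}{dt}\mathrm{D}_{\mathrm{KL}}(p_t\|\pi)=-\mathcal{I}(p_t\|\pi)$.

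This cancellation, together with verifying that the boundary contributions vanish (which relies on $\pi\gamma$ and the probability flux having no normal component on $\partial\Omega$), is the only point that requires care; the remaining steps are formal. Indeed, given the identity, part (i) is immediate: Corollary~\ref{col2} gives $\mathcal{I}(p_t\|\pi)\geq 2\lambda\,\mathrm{D}_{\mathrm{KL}}(p_t\|\pi)$, hence $\frac{d}{dt}\mathrm{D}_{\mathrm{KL}}(p_t\|\pi)\leq-2\lambda\,\mathrm{D}_{\mathrm{KL}}(p_t\|\pi)$, and Gr\"onwall's inequality yields $\mathrm{D}_{\mathrm{KL}}(p_t\|\pi)\leq\mathrm{D}_{\mathrm{KL}}(p_0\|\pi)e^{-2\lambda t}$.

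For part (ii), I would invoke the Otto--Villani theorem \cite{OV}: since $\pi$ satisfies LSI$(\lambda)$ by Corollary~\ref{col2}, it also satisfies the Talagrand transport inequality $W(p,\pi)^2\leq\frac{2}{\lambda}\mathrm{D}_{\mathrm{KL}}(p\|\pi)$ for every smooth probability density $p$; applying this at $p=p_t$ and inserting part (i) gives $W(p_t,\pi)^2\leq\frac{2}{\lambda}\mathrm{D}_{\mathrm{KL}}(p_0\|\pi)e^{-2\lambda t}$, which is the claim after taking square roots. For part (iii), I would apply the Csisz\'ar--Kullback--Pinsker inequality $\big(\int_\Omega|p-\pi|\,dx\big)^2\leq 2\,\mathrm{D}_{\mathrm{KL}}(p\|\pi)$ at $p=p_t$ and again insert part (i). Thus the real content lies in the dissipation identity of the first step and in Corollary~\ref{col2}; parts (ii)--(iii) are formal consequences of standard functional inequalities and present no further difficulty.
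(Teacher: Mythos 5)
Your proposal is correct and follows essentially the same route as the paper: the dissipation identity $\frac{d}{dt}\mathrm{D}_{\mathrm{KL}}(p_t\|\pi)=-\mathcal{I}(p_t\|\pi)$ (which the paper establishes as the Claim in the proof of Corollary~\ref{col2}), Gr\"onwall for (i), and the Otto--Villani/Talagrand and Csisz\'ar--Kullback--Pinsker inequalities for (ii) and (iii). Your observation that the cross term vanishes via $p_t\nabla\log h_t=\pi\nabla h_t$ and one integration by parts is a slightly more compact version of the paper's computation, but it is the same argument in substance.
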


\begin{theorem}[Non-reversible Poincar{\'e} inequality]\label{thm2}
Assume condition \eqref{C} holds. Then the following inequality holds. 
\begin{equation*}
\int_\Omega \|h(x)-\int_\Omega h(y)\pi(y)dy\|^2 \pi(x)dx\leq \frac{1}{\lambda}\int_\Omega\Big(\|\nabla h(x)\|^2-h(x)(\nabla h(x), \gamma(x))\Big)\pi(x)dx, 
\end{equation*}
for any $h\in C^\infty(\Omega)$.
\end{theorem}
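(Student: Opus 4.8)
\emph{Proof proposal.} The plan is to linearize the Fokker--Planck flow \eqref{FPE} around the invariant density $\pi$ and read off the inequality from the exponential entropy decay already established. First I would reduce to the mean-zero case: it suffices to prove the inequality for $h$ with $\int_\Omega h\pi\,dx=0$, since replacing $h$ by $h-\int_\Omega h\,\pi$ changes neither side — the right-hand side is unchanged because $\nabla h$ is unaffected and, using $h\nabla h=\tfrac12\nabla(h^2)$, the identity $\nabla\cdot(\pi\gamma)=0$, and the boundary condition,
\begin{equation*}
\int_\Omega (\nabla h,\gamma)\,\pi\,dx=\tfrac12\int_\Omega \big(\nabla(h^2),\gamma\pi\big)\,dx=-\tfrac12\int_\Omega h^2\,\nabla\cdot(\pi\gamma)\,dx=0 .
\end{equation*}
The same computation shows the $\gamma$-term on the right-hand side of the theorem vanishes identically, so the statement is equivalent to the classical Poincar\'e inequality with constant $\lambda$; I keep the $\gamma$-term because it is the natural energy of the non-reversible dynamics and comes out of the proof for free.

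Next I would set up the linearized flow. Fix small $\epsilon>0$ and take $p_0=\pi(1+\epsilon h)$, which is a genuine smooth probability density for $\epsilon$ small because $\Omega$ is compact and $h$ bounded; note that no boundary condition on $h$ is needed. Since \eqref{FPE} is linear, the solution is \emph{exactly} $p_t=\pi(1+\epsilon h_t)$, where a short computation using $\nabla\pi-\pi b=\pi\gamma$ and $\nabla\cdot(\pi\gamma)=0$ shows that $h_t$ solves $\partial_t h_t=\Delta h_t+(\nabla\log\pi,\nabla h_t)+(\gamma,\nabla h_t)$ with $h_0=h$; moreover $\int_\Omega h_t\pi\,dx=0$ for all $t$ and $\|h_t\|_\infty\le\|h\|_\infty$ by the maximum principle. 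Integrating by parts, and using $\int_\Omega h_t(\Delta h_t+(\nabla\log\pi,\nabla h_t))\pi\,dx=-\int_\Omega\|\nabla h_t\|^2\pi\,dx$, one obtains the energy identity
\begin{equation*}
\frac{d}{dt}\int_\Omega h_t^2\pi\,dx=-2\int_\Omega\Big(\|\nabla h_t\|^2-h_t(\nabla h_t,\gamma)\Big)\pi\,dx .
\end{equation*}

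Then I would compare with the decay. Expanding in $\epsilon$ and using $\int_\Omega h_t\pi\,dx=0$ gives $\KL(p_t\|\pi)=\tfrac{\epsilon^2}{2}\int_\Omega h_t^2\pi\,dx+o(\epsilon^2)$ for each fixed $t$; combining with Corollary \ref{col3}(i), namely $\KL(p_t\|\pi)\le e^{-2\lambda t}\KL(p_0\|\pi)$, dividing by $\epsilon^2$, and letting $\epsilon\to0$ yields $\int_\Omega h_t^2\pi\,dx\le e^{-2\lambda t}\int_\Omega h^2\pi\,dx$ for all $t\ge0$, with equality at $t=0$. Differentiating this at $t=0$ and invoking the energy identity gives $\int_\Omega h^2\pi\,dx\le\frac1\lambda\int_\Omega(\|\nabla h\|^2-h(\nabla h,\gamma))\pi\,dx$, which is the asserted inequality for mean-zero $h$; the first step then delivers it for arbitrary $h\in C^\infty(\Omega)$.

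I expect the only real obstacle to be the routine but slightly technical justification of the second-order $\epsilon$-expansion of the relative entropy and its interchange with the limit $\epsilon\to0$; this is mild on a compact domain, where $1+\epsilon h_t$ is uniformly bounded away from $0$ and $\infty$ so that $\log(1+\epsilon h_t)$ has a uniform Taylor remainder. A shorter but less structural variant skips the flow altogether: linearize the log-Sobolev inequality of Corollary \ref{col2} at $p=\pi(1+\epsilon h)$ to get $\int_\Omega h^2\pi\,dx\le\frac1\lambda\int_\Omega\|\nabla h\|^2\pi\,dx$ at once, and then add back the vanishing $\gamma$-term.
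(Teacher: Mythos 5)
Your argument is correct, but it takes a genuinely different route from the paper. The paper proves Theorem \ref{thm2} by first establishing the asymptotic identity $\int_\Omega(\Gamma_2(\Phi,\Phi)+\Gamma_{\mathcal I}(\Phi,\Phi))\pi\,dx=\int_\Omega\tilde L\Phi\,(\tilde L\Phi+(\nabla\Phi,\gamma))\pi\,dx$ (obtained by setting $\log(p/\pi)=\epsilon\Phi$ and letting $\epsilon\to0$ in the weak form of the information Gamma calculus), and then recasting condition \eqref{C} as an operator inequality $g_{\mathrm W}(\pi)^{-1}\circ g_F(\pi)\circ(-L^*_{\pi,\gamma})\succeq\lambda\,g_{\mathrm W}(\pi)^{-1}$ which it ``cancels'' to $-L^*_{\pi,\gamma}\succeq\lambda\,g_F(\pi)^{-1}$; unpacking this gives the stated inequality. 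You instead run the classical ``exponential entropy decay implies Poincar\'e'' linearization: take $p_0=\pi(1+\epsilon h)$, use the exact linearity of \eqref{FPE} to get $p_t=\pi(1+\epsilon h_t)$, pass the KL decay of Corollary \ref{col3}(i) to the $L^2(\pi)$-variance via the second-order expansion, and differentiate at $t=0$. This is more elementary, uses the earlier results only as black boxes, and sidesteps the paper's operator-cancellation step (which, as stated for a non-symmetric $C$ in the remark following the proof, deserves more care than the paper gives it). Your reduction also exposes a fact the paper does not record: since $h(\nabla h,\gamma)=\tfrac12(\nabla(h^2),\gamma)$ and $\nabla\cdot(\pi\gamma)=0$ with the no-flux condition $\pi\gamma\cdot n=0$ on $\partial\Omega$, the $\gamma$-term on the right-hand side integrates to zero, so the ``non-reversible'' Poincar\'e inequality is in fact equivalent to the classical one for $\pi$ with the same constant $\lambda$. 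Two minor caveats: the vanishing of that term does rely on the boundary no-flux condition (implicit throughout the paper's integrations by parts), and your claim that ``no boundary condition on $h$ is needed'' for the exact representation $p_t=\pi(1+\epsilon h_t)$ is slightly glib --- strictly one should first prove the inequality for $h$ compatible with the Neumann condition and then conclude for general $h\in C^\infty(\Omega)$ by density, since both sides are continuous in $H^1(\pi)$.
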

\begin{remark}[Comparison with entropy dissipations for reversible SDEs]
If $b(x)=-\nabla U(x)$, where $U\in C^\infty(\Omega)$ is a given function, then our dissipation rate recovers the classical entropy dissipation results. In other words, 
\begin{equation*}
\pi(x)=\frac{1}{Z}e^{-U(x)},\quad \textrm{where $Z$ is a normalization constant,}
\end{equation*}
and $\mathfrak{R}(x)=-\nabla^2\log \pi(x)=\nabla^2U(x)$. In this case, our condition \eqref{C} forms $\nabla^2U(x)\succeq \lambda \mathbb{I}$. The dissipation result in Theorem \ref{thm1} forms the classical entropy dissipation result \cite{Villani2009_optimal}. We emphasize that the entropy dissipation result also holds for non-gradient stochastic dynamics. And we provide a condition by $\mathfrak{R}(x)=-\nabla^2\log \pi(x)-A$, where the formulation of $A$ depends on the non-gradient term explicitly. 
\end{remark}

 \begin{remark}[Comparison with classical Gamma calculus]
We remark that our result is different from the classical Gamma calculus studied in \cite{BE, BE2008, BGH2019,CG2017, K} for general drift vector fields. In other words,
\begin{equation*}
\mathfrak{R}_{ij}(x)\neq -\frac{1}{2}(\frac{\partial}{\partial x_i}b_j(x)+\frac{\partial}{\partial x_j}b_i(x)).    
\end{equation*}
In classical Gamma calculus \cite{BE}, they consider the dissipation in the sense of semi-groups. 
It evolves the expectation of other testing function in the transition kernel. Our results of Gamma calculus characterize the dissipation in the sense of Fisher information. The resulting tensors are different in term of formulations for non-reversible SDEs. These two methods have different purposes. The Bakry-{\'E}mery's calculus works for general $b$ without any assumptions on $\pi$, while ours do require to know the analytical formulation of $\pi$. 
 \end{remark}
 
 \begin{remark}[Comparison with Arnold--Carlen tensor]
 We remark that Arnold--Carlen derive the other generalized Bakry--{\'E}mery tensor \cite{AC}; see also related studies in \cite{ACJ, FJ}. In details, 
 \begin{equation*}
 \mathfrak{R}_{\mathrm{AC}}=-\nabla^2\log \pi-\frac{1}{2}(\nabla \gamma+\nabla\gamma^{\ts}). 
 \end{equation*} 
 Our formulation in the weak formulation coincides with the Arnold--Carlen's tensor. We remark that the lower bound of smallest eigenvalue of $\mathfrak{R}_{\mathrm{AC}}$ also implies all dissipation results and inequalities in this paper. We provide geometric connections in subsection \ref{ACT}. Later on from a numerical example, we demonstrate that our tensor focuses on the local spatial domain convergence behavior while the Arnold-Carlen tensor works on the convergence property in global spatial domain. The combination usage of all these Gamma calculuses for studying general stochastic dynamics are left for future works. 
  \end{remark}
 
 \begin{remark}
We note that our results provide the other way to derive explicit bounds for classical log-Sobolev inequalities of a given invariant distribution \cite{AC, Gross}. A fact is that each gradient or non-gradient stochastic system is associated with a tensor $\mathfrak{R}$ and its positive lower bound $\lambda$ (if it exists) for the associated log-Sobolev inequality. Here the bound $\lambda$ can be used to prove the exponential decay result as follows:  
\begin{equation*}
\mathfrak{R}\succeq \lambda \mathbb{I} \Rightarrow \textrm{$\mathcal{I}(p_t\|\pi)$ decay}\Rightarrow \textrm{LSI($\lambda$)}\Rightarrow \textrm{$\mathrm{D}_{\mathrm{KL}}(p_t\|\pi)$ decay}\Rightarrow \textrm{$L_1(p_t\|\pi)$ decay.}      
\end{equation*}
\end{remark}
\begin{remark} It is worth mentioning that our Poincar{\'e} inequality recovers the classical Poincar{\'e} inequality when $\gamma=0$. 
Similarly, our proof for non-reversibility Poincar{\'e} inequality can also be generalized to the ones using the dimension of domain. We leave details of this direction in future works. 
\end{remark}
 
\section{Proof of Theorem \ref{thm1}}\label{sec3}
In this section, we present the proof of this paper. 
\subsection{Motivation}
We provide the motivation for our main result, especially Theorem \ref{thm1}. It is a Lyapunov method on probability density space. Our derivation is given by the following three steps. 
\begin{itemize}
\item We first compute the derivative along with the dynamics \eqref{FPE}. 
\begin{equation*}
\frac{d}{dt}\mathcal{I}(p_t\|\pi)=-2\cdot \textrm{Bilinear form}.
\end{equation*}
Here the bilinear form is the quadratic term functional involving with function $\log\frac{p_t}{\pi}$. 
\item We second complete the square in the first step, derive a new bilinear form and obtain the smallest eigenvalue from it, i.e. 
\begin{equation*}
\textrm{Bilinear form}\geq \lambda \mathcal{I}(p_t\|\pi). 
\end{equation*}
\item We last finish the proof by using the Grownwall's inequality for  
\begin{equation*}
\frac{d}{dt}\mathcal{I}(p_t\|\pi)\leq -2\lambda\mathcal{I}(p_t\|\pi).
\end{equation*}
\end{itemize}

We next present the above proofs in details. To do so, we apply the following notations.

\noindent{\textbf{Notations}:} Denote $\tilde L\colon C^{\infty}(\Omega)\rightarrow C^\infty(\Omega)$ as follows: for any $h\in C^\infty(\Omega)$, define
\begin{equation*}
\tilde L h= (\nabla\log \pi, \nabla h)+\Delta h. 
\end{equation*}
Denote $\tilde L^*\colon C^{\infty}(\Omega)\rightarrow C^{\infty}(\Omega)$ by 
\begin{equation*}
\tilde L^* p=-\nabla\cdot(p\nabla \log \pi)+\Delta p.
\end{equation*}
Clearly, we have the following dual relation between $\tilde L$ and $\tilde L^*$, i.e.
\begin{equation}\label{dual}
\int_\Omega \tilde Lh(x) p(x)dx= \int_\Omega h(x)L^*p(x)dx,
\end{equation}
for any $h\in C^\infty(\Omega)$.

For any functions $f\in C^{\infty}(\Omega)$, denote the following three operators. 
\begin{itemize}
\item[(i)] Gamma one operator: Denote $\Gamma_1\colon C^\infty(\Omega)\times C^\infty(\Omega)\rightarrow\mathbb{R}$ by   
\begin{equation*}
\Gamma_1(f,f)=(\nabla f, \nabla f).
\end{equation*}
\item[(ii)] Gamma two operator: Denote $\tilde \Gamma_2\colon C^\infty(\Omega)\times C^\infty(\Omega)\rightarrow\mathbb{R}$ by
\begin{equation*}
\tilde \Gamma_2(f,f)=\frac{1}{2}\tilde L\Gamma_1(f,f)-\Gamma_1(\tilde L f, f).
\end{equation*}
\item[(iii)] Information Gamma operator: Denote $\Gamma_{\mathcal{I}}\colon C^\infty(\Omega)\times C^\infty(\Omega)\rightarrow\mathbb{R}$ by 
\begin{equation*}
\Gamma_{\mathcal{I}}(f,f)=-\frac{1}{2}\big(\gamma, \nabla\Gamma_1(f, f)\big)+\tilde Lf\cdot (\nabla f, \gamma), 
\end{equation*}
where vector field $\gamma\colon \Omega\rightarrow\mathbb{R}^d$ is defined by \eqref{G}, i.e. 
\begin{equation*}
\gamma(x)=\nabla\log \pi(x)-b(x). 
\end{equation*}
\end{itemize}
\subsection{Dissipation of relative Fisher information}
We first compute the dissipation of relative Fisher information along with Fokker-Planck equation \eqref{FPE}. 
\begin{lemma}\label{lem1}
The following equality holds. Denote $p_t=p(t,x)$ as the solution of Fokker-Planck equation \eqref{FPE}, then
\begin{equation*}
\frac{d}{dt}\mathcal{I}(p_t\|\pi)=-2\int_\Omega\Big[ \tilde \Gamma_2(\log\frac{p_t}{\pi}, \log\frac{p_t}{\pi})+\Gamma_{\mathcal{I}}(\log\frac{p_t}{\pi}, \log\frac{p_t}{\pi})\Big]p_tdx.
\end{equation*}
\end{lemma}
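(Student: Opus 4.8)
\emph{Plan of proof.} The plan is to differentiate $\mathcal I(p_t\|\pi)=\int_\Omega\Gamma_1(f_t,f_t)\,p_t\,dx$, where $f_t:=\log\frac{p_t}{\pi}$, by means of the first variation of the Fisher information functional, and then to simplify each resulting term by integration by parts. As a preliminary step I would rewrite the Fokker--Planck equation \eqref{FPE} in the split form
\begin{equation*}
\partial_t p_t=\tilde L^{*}p_t+\nabla\cdot(p_t\gamma),
\end{equation*}
which is immediate from $b=\nabla\log\pi-\gamma$ and the definition of $\tilde L^{*}$. Two identities will be used throughout: first, $\nabla\cdot(p\nabla f)=p\big(\tilde Lf+\Gamma_1(f,f)\big)$ for $f=\log\frac{p}{\pi}$, which follows from $\nabla\log p=\nabla\log\pi+\nabla f$; and second, $\nabla\cdot(p\gamma)=p\,(\gamma,\nabla f)$, which is exactly the constraint $\nabla\cdot(\pi\gamma)=0$ rewritten, and which is what keeps the non-gradient contribution under control.

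Computing the first variation, a perturbation $p\mapsto p+\varepsilon\phi$ together with the relation $\frac{d}{d\varepsilon}\big|_{\varepsilon=0}\log\frac{p+\varepsilon\phi}{\pi}=\phi/p$ and one integration by parts gives
\begin{equation*}
\frac{\delta\mathcal I}{\delta p}=-2\tilde Lf-\Gamma_1(f,f).
\end{equation*}
Therefore $\frac{d}{dt}\mathcal I(p_t\|\pi)=\int_\Omega\big(-2\tilde Lf_t-\Gamma_1(f_t,f_t)\big)\big(\tilde L^{*}p_t+\nabla\cdot(p_t\gamma)\big)\,dx$, which I would split into a reversible piece (coming from $\tilde L^{*}p_t$) and a non-reversible piece (coming from $\nabla\cdot(p_t\gamma)$).

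For the reversible piece I would apply the duality \eqref{dual} to transfer $\tilde L^{*}$ onto the test function, reducing it to $-\int_\Omega\big(2\tilde L\tilde Lf+\tilde L\Gamma_1(f,f)\big)p\,dx$, and then use the integration-by-parts identity $\int_\Omega\tilde L\tilde Lf\,p\,dx=-\int_\Omega\Gamma_1(\tilde Lf,f)\,p\,dx$ to recognize exactly $-2\int_\Omega\tilde\Gamma_2(f,f)\,p\,dx$. For the non-reversible piece I would integrate by parts once more, obtaining $2\int_\Omega(\nabla\tilde Lf,\gamma)\,p\,dx+\int_\Omega(\nabla\Gamma_1(f,f),\gamma)\,p\,dx$, and then invoke $\int_\Omega(\nabla\tilde Lf,\gamma)\,p\,dx=-\int_\Omega\tilde Lf\,(\nabla f,\gamma)\,p\,dx$ (integrate by parts and substitute $\nabla\cdot(p\gamma)=p\,(\gamma,\nabla f)$); comparing with the definition of $\Gamma_{\mathcal I}$ then shows this piece equals $-2\int_\Omega\Gamma_{\mathcal I}(f,f)\,p\,dx$. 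Adding the two pieces yields the lemma.

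The routine-but-delicate part of all of this is the bookkeeping of the several integrations by parts, and in particular verifying that the boundary contributions vanish under the Neumann conditions attached to \eqref{FPE}. The genuinely substantive step is the identification of the non-reversible piece with the information Gamma operator: one has to check that $\nabla\cdot(p\gamma)$ generates \emph{precisely} $\Gamma_{\mathcal I}$ and produces no leftover terms, which is exactly where the structural constraint $\nabla\cdot(\pi\gamma)=0$ is used. I expect this matching to be the main obstacle.
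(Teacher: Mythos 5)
Your proposal is correct and follows essentially the same route as the paper: you differentiate to get $\frac{d}{dt}\mathcal I=\int(-2\tilde Lf-\Gamma_1(f,f))\,\partial_tp\,dx$ (the paper derives this same intermediate formula by an in-line computation rather than via the first variation), split $\partial_tp=\tilde L^{*}p+\nabla\cdot(p\gamma)$, and identify the two pieces with $\tilde\Gamma_2$ and $\Gamma_{\mathcal I}$ using the duality relation, integration by parts, and the key identity $\nabla\cdot(p\gamma)=p(\nabla f,\gamma)$ coming from $\nabla\cdot(\pi\gamma)=0$. The minor differences (computing $\int\tilde Lf\,\tilde L^{*}p\,dx$ via $\tilde L\tilde Lf$ and duality, and the extra back-and-forth integration by parts on the non-reversible piece) are cosmetic and all steps check out.
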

\begin{proof}
Before the computation, we notice that there exists a global solution for \eqref{FPE} from the uniform elliptical condition. And by perturbation argument in the appendix of \cite{OV}, we can carry out the proof as follows. 

We compute the derivative of $t$ for relative Fisher information along the Fokker-Planck equation \eqref{FPE}. 
\begin{equation}\label{mainc}
\begin{split}
\frac{d}{dt}\mathcal{I}(p_t\|\pi)=&\frac{d}{dt}\int_\Omega \|\nabla\log\frac{p_t}{\pi}\|^2p_t dx\\
=&\int_\Omega \frac{\partial}{\partial t}\Big(\|\nabla\log\frac{p_t}{\pi}\|^2\Big)p_t dx+\int_\Omega \|\nabla\log\frac{p_t}{\pi}\|^2 \frac{\partial}{\partial t}p_t dx\\
=&2\int_\Omega\Big(\nabla\log\frac{p_t}{\pi}, \nabla\partial_t\log\frac{p_t}{\pi}\Big)p_t dx+\int_\Omega \|\nabla\log\frac{p_t}{\pi}\|^2 \partial_tp_t dx\\
=&2\int_\Omega\Big(\nabla\log\frac{p_t}{\pi}, \nabla\frac{\partial_t p_t}{p_t}\Big)p_t dx+\int_\Omega \|\nabla\log\frac{p_t}{\pi}\|^2 \partial_tp_t dx\\
=&-2\int_\Omega\frac{1}{p_t} \nabla\cdot (p_t\nabla\log\frac{p_t}{\pi})\partial_t p_tdx+\int_\Omega \|\nabla\log\frac{p_t}{\pi}\|^2 \partial_tp_t dx\\
=&-2\int_\Omega \Big((\frac{\nabla p_t}{p_t}, \nabla\log\frac{p_t}{\pi})+\Delta\log\frac{p_t}{\pi}\Big)\partial_t p_tdx+\int_\Omega \|\nabla\log\frac{p_t}{\pi}\|^2 \partial_tp_t dx\\
=&-2\int_\Omega \Big((\nabla\log p_t, \nabla\log\frac{p_t}{\pi})+\Delta\log\frac{p_t}{\pi}\Big)\partial_t p_tdx+\int_\Omega \|\nabla\log\frac{p_t}{\pi}\|^2 \partial_tp_t dx
\end{split}
\end{equation}
\begin{equation*}
    \begin{split}
=&-2\int_\Omega \Big(\|\nabla\log \frac{p_t}{\pi}\|^2+(\nabla \log \pi, \nabla \log\frac{p_t}{\pi})+\Delta\log\frac{p_t}{\pi}\Big)\partial_t p_tdx\\
&\quad +\int_\Omega \|\nabla\log\frac{p_t}{\pi}\|^2 \partial_tp_t dx\\
=&-2\int_\Omega \Big((\nabla \log \pi, \nabla \log\frac{p_t}{\pi})+\Delta\log\frac{p_t}{\pi}\Big)\partial_t p_tdx\\
&-\int_\Omega \|\nabla\log\frac{p_t}{\pi}\|^2 \partial_tp_t dx,
\end{split}
\end{equation*}
where we apply $\nabla\cdot(p_t\nabla\log\frac{p_t}{\pi})=(\nabla p_t, \nabla\log\frac{p_t}{\pi})+p_t\Delta\log\frac{p_t}{\pi}$ in the sixth equality and $\frac{\nabla p_t}{p_t}=\nabla \log p_t$ in the seventh equality. 

We next apply some notations to simply the formulations in \eqref{mainc}. Notice 
\begin{equation*}
\partial_t p_t=-\nabla\cdot(p_t b)+\Delta p_t=\tilde L^*p_t+\nabla\cdot(p_t \gamma).
\end{equation*}
Hence
\begin{equation*}
\begin{split}
-\frac{1}{2}\frac{d}{dt}\mathcal{I}(p_t\|\pi)=&\int_\Omega \tilde L\log\frac{p_t}{\pi}\partial_t p_tdx+\frac{1}{2}\int_\Omega \Gamma_1(\log\frac{p_t}{\pi}, \log\frac{p_t}{\pi}) \partial_tp_t dx\\
=&\quad\int_\Omega \tilde L\log\frac{p_t}{\pi} \Big(\tilde L^*p_t+\nabla\cdot(p_t\gamma)\Big)dx\\
&+\frac{1}{2}\int_\Omega \Gamma_1(\log\frac{p_t}{\pi}, \log\frac{p_t}{\pi})\Big(\tilde L^*p_t+\nabla\cdot(p_t\gamma)\Big)dx\\
=&\quad\int_\Omega \tilde L\log\frac{p_t}{\pi}\tilde L^*p_t+\frac{1}{2}\Gamma_1(\log\frac{p_t}{\pi}, \log\frac{p_t}{\pi})\tilde L^*p_t dx\\
&+\int_\Omega \tilde L\log\frac{p_t}{\pi} \nabla\cdot(p_t\gamma) dx+\frac{1}{2}\int_\Omega \Gamma_1(\log\frac{p_t}{\pi}, \log\frac{p_t}{\pi})\nabla\cdot(p_t\gamma) dx.
\end{split}
\end{equation*}
In the last of derivation, we show the following claim. 

\noindent{\textbf{Claim}:}
The following equalities hold: 
\begin{itemize}
\item[(i)]
\begin{equation*}
\int_\Omega \Gamma_2(\log\frac{p_t}{\pi}, \log\frac{p_t}{\pi})p_t dx=\int_\Omega \tilde L\log\frac{p_t}{\pi}\tilde L^*p_t+\frac{1}{2}\Gamma_1(\log\frac{p_t}{\pi}, \log\frac{p_t}{\pi})\tilde L^*p_t dx.
\end{equation*}
\item[(ii)]
\begin{equation*}
\int_\Omega \Gamma_{\mathcal{I}}(\log\frac{p_t}{\pi}, \log\frac{p_t}{\pi})p_t dx=\int_\Omega \tilde L\log\frac{p_t}{\pi} \nabla\cdot(p_t\gamma) dx+\frac{1}{2}\int_\Omega \Gamma_1(\log\frac{p_t}{\pi}, \log\frac{p_t}{\pi})\nabla\cdot(p_t\gamma) dx.
\end{equation*}
\end{itemize}
If the claim is true, our result is shown. Our goal is to present the proof. 

\noindent \textbf{\em Proof of Claim}
(i) The first part of proof follows the classical dissipation result \cite{OV, Villani2009_optimal} etc, widely used in $L^2$--Wasserstein gradient flows and their generalizations \cite{LiG2}. For the completeness of this paper, we still present it here. On the one hand, from the dual relation \eqref{dual} between $\tilde L$ and $\tilde L^*$, we have  
\begin{equation*}
\int_\Omega \frac{1}{2}\Gamma_1(\log\frac{p_t}{\pi}, \log\frac{p_t}{\pi})\tilde L^*p_t dx=\frac{1}{2}\int_\Omega \tilde L\Gamma_1(\log\frac{p_t}{\pi}, \log\frac{p_t}{\pi}) p_t dx.
\end{equation*}
On the other hand, 
\begin{equation*}
\begin{split}
\int_\Omega \tilde L\log\frac{p_t}{\pi}\tilde L^*p_t=&\int_\Omega \tilde L\log\frac{p_t}{\pi}\nabla\cdot(p_t\nabla\log\frac{p_t}{\pi})dx\\
=&-\int_\Omega (\nabla \tilde L\log\frac{p_t}{\pi}, \nabla \log\frac{p_t}{\pi})p_t dx\\
=&-\int_\Omega\Gamma_1(\tilde L\log\frac{p_t}{\pi}, \log\frac{p_t}{\pi})p_tdx,
\end{split}
\end{equation*}
where the second equality holds by the integration by part formula. Combining above derivations, we obtain the result. 

(ii) The second part of derivation majorly depends on the non-gradient vector field $\gamma$. On the one hand, 
\begin{equation*}
\begin{split}
\int_\Omega \tilde L\log\frac{p_t}{\pi} \nabla\cdot(p_t\gamma) dx=&\int_\Omega \tilde L\log\frac{p_t}{\pi}\Big((\nabla p_t, \gamma)+p_t\nabla\cdot \gamma\Big)dx\\
=&\int_\Omega \tilde L\log\frac{p_t}{\pi}\Big((\nabla \log p_t, \gamma)+\nabla\cdot \gamma\Big)p_tdx\\
=&\int_\Omega \tilde L\log\frac{p_t}{\pi}\Big((\nabla \log \frac{p_t}{\pi}, \gamma)+(\nabla \log \pi, \gamma)+\nabla\cdot \gamma\Big)p_tdx\\
=&\int_\Omega \tilde L\log\frac{p_t}{\pi}\Big((\nabla \log \frac{p_t}{\pi}, \gamma)+\pi\nabla\cdot(\pi \gamma)\Big)p_tdx\\
=&\int_\Omega \tilde L\log\frac{p_t}{\pi}(\nabla \log \frac{p_t}{\pi}, \gamma) p_tdx,
\end{split}
\end{equation*}
where the last equality uses  the fact that $\pi$ is an invariant distribution, i.e. 
\begin{equation*}
\nabla\cdot( \pi \gamma)=0. 
\end{equation*}
On the other hand, we have 
\begin{equation*}
\begin{split}
\frac{1}{2}\int_\Omega \Gamma_1(\log\frac{p_t}{\pi}, \log\frac{p_t}{\pi})\nabla\cdot(p_t\gamma) dx=-\frac{1}{2}\int_\Omega \Big(\nabla \Gamma_1(\log\frac{p_t}{\pi}, \log\frac{p_t}{\pi}), \gamma\Big)p_t dx,
\end{split}
\end{equation*}
where the equality holds by the integration by parts formula. Combining the above two steps, we finish the proof. 
\end{proof}

\subsection{Fisher information induced Gamma calculus}
We next derive the dissipation rate of relative Fisher information functional. 
\begin{lemma}\label{lem2}
Given any test function $f\in C^{\infty}(\Omega)$, the following equalities hold.  
\begin{equation*}
\begin{split}
&\Gamma_2(f,f)+\Gamma_{\mathcal{I}}(f,f)\\
=&\|\nabla^2 f\|^2_{\mathrm{F}}-\nabla^2\log \pi(\nabla f, \nabla f)-(\gamma, \nabla^2f\nabla f)+\Delta f(\nabla f, \gamma)+(\nabla f, \nabla \log\pi)(\nabla f, \gamma)\\
=&\|\mathfrak{Hess}f\|_{\mathrm{F}}^2+\mathfrak{R}(\nabla f,\nabla f).
\end{split}
\end{equation*}
Here $\|\cdot\|_{\mathrm{F}}$ is the Frobenius norm, $\mathfrak{R}$ is the matrix function defined in \eqref{R}, and $\mathfrak{Hess}f=(\mathfrak{Hess}f_{ij})_{1\leq i,j\leq d}\colon \Omega\rightarrow\mathbb{R}^{d\times d}$ is the matrix function defined by
\begin{equation*}
\mathfrak{Hess}f(x)_{ij}=
\begin{cases}
\frac{\partial^2}{\partial x_i\partial x_i}f(x)+\frac{1}{2}\sum_{k=1}^d\frac{\partial}{\partial x_k}f(x)\gamma_k(x)-\frac{1}{2}\frac{\partial}{\partial x_i}f(x)\gamma_i(x) &\textrm{if $i=j$;}\\
\frac{\partial^2}{\partial x_i\partial x_j} f(x)-\frac{1}{4}\Big(\gamma_i(x)\frac{\partial}{\partial x_j}f(x)+\gamma_j(x)\frac{\partial}{\partial x_i} f(x)\Big) &\textrm{if $j\neq i$.}
\end{cases}
\end{equation*}
 \end{lemma}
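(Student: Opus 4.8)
The plan is to verify the two displayed equalities by direct computation in coordinates: the first is a Bochner-type identity for the reversible generator $\tilde L$, and the second is a completion of squares. For the first equality I would expand $\tilde\Gamma_2(f,f)=\tfrac12\tilde L\Gamma_1(f,f)-\Gamma_1(\tilde Lf,f)$ with $\Gamma_1(f,f)=\|\nabla f\|^2$; using the flat-space Bochner identity $\Delta\|\nabla f\|^2=2\|\nabla^2 f\|_{\mathrm F}^2+2(\nabla f,\nabla\Delta f)$ together with $\nabla(\nabla\log\pi,\nabla f)=\nabla^2\log\pi\,\nabla f+\nabla^2 f\,\nabla\log\pi$, the third-order terms and the terms $(\nabla\log\pi,\nabla^2 f\nabla f)$ cancel, leaving the standard formula $\tilde\Gamma_2(f,f)=\|\nabla^2 f\|_{\mathrm F}^2-\nabla^2\log\pi(\nabla f,\nabla f)$, which I would simply quote. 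For $\Gamma_{\mathcal I}$ I would substitute $\nabla\Gamma_1(f,f)=2\nabla^2 f\nabla f$ and $\tilde Lf=\Delta f+(\nabla\log\pi,\nabla f)$ directly into $\Gamma_{\mathcal I}(f,f)=-\tfrac12(\gamma,\nabla\Gamma_1(f,f))+\tilde Lf\,(\nabla f,\gamma)$, obtaining $-(\gamma,\nabla^2 f\nabla f)+\Delta f(\nabla f,\gamma)+(\nabla f,\nabla\log\pi)(\nabla f,\gamma)$. Adding the two produces the first line verbatim.

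For the second equality I would write $\mathfrak{Hess}f=\nabla^2 f+B$, where $B$ is the symmetric matrix with entries $B_{ij}=-\tfrac14(\gamma_i\partial_j f+\gamma_j\partial_i f)+\tfrac12\delta_{ij}(\nabla f,\gamma)$, and expand $\|\mathfrak{Hess}f\|_{\mathrm F}^2=\|\nabla^2 f\|_{\mathrm F}^2+2\langle\nabla^2 f,B\rangle_{\mathrm F}+\|B\|_{\mathrm F}^2$. Using the symmetry of $\nabla^2 f$, the cross term collapses to $2\langle\nabla^2 f,B\rangle_{\mathrm F}=-(\gamma,\nabla^2 f\nabla f)+\Delta f(\nabla f,\gamma)$, which is exactly the pair of mixed second-order terms in the first line — this is precisely why $\mathfrak{Hess}f$ carries these coefficients. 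It then remains to account for the purely first-order residue: expanding the square and summing indices, $\|B\|_{\mathrm F}^2$ is a quadratic form in $\nabla f$ built from $\|\gamma\|^2\|\nabla f\|^2$ and $(\nabla f,\gamma)^2$, with the domain dimension entering through $\sum_i\delta_{ii}=d$. Combining $\|B\|_{\mathrm F}^2$ with the remaining term $(\nabla f,\nabla\log\pi)(\nabla f,\gamma)$ from the first line and symmetrizing $\gamma\otimes\nabla\log\pi$, I would read off that the resulting quadratic form in $\nabla f$ is precisely $-A(\nabla f,\nabla f)$ with $A$ as in \eqref{R} (comparing diagonal and off-diagonal entries separately). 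Hence the first line equals $\|\mathfrak{Hess}f\|_{\mathrm F}^2+\big(-\nabla^2\log\pi-A\big)(\nabla f,\nabla f)=\|\mathfrak{Hess}f\|_{\mathrm F}^2+\mathfrak R(\nabla f,\nabla f)$.

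The main obstacle is the index bookkeeping in the second equality: one must check simultaneously that the cross term $2\langle\nabla^2 f,B\rangle_{\mathrm F}$ reproduces the mixed terms $-(\gamma,\nabla^2 f\nabla f)+\Delta f(\nabla f,\gamma)$ and that $\|B\|_{\mathrm F}^2$ together with $(\nabla f,\nabla\log\pi)(\nabla f,\gamma)$ assembles into the quadratic form associated with $A$, while tracking diagonal versus off-diagonal coefficients and signs throughout. A useful first check is the case $d=1$, where $\mathfrak{Hess}f=f''$ and the mixed terms cancel identically, so both sides reduce to $(f'')^2-(\log\pi)''(f')^2\pm\gamma(\log\pi)'(f')^2$; verifying the coefficient $\tfrac{3-2d}{8}$ in general then amounts to tracking the $\tfrac{d}{4}(\nabla f,\gamma)^2$ contribution coming from $\sum_i\delta_{ii}$ in $\|B\|_{\mathrm F}^2$. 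One should also note that this computation uses the analytic form of $\pi$ only through $\nabla\log\pi$, consistent with the hypothesis of the theorem.
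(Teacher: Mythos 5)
Your plan for the first equality is exactly the paper's: Bochner's formula for $\tilde\Gamma_2$ and direct substitution of $\nabla\Gamma_1(f,f)=2\nabla^2f\nabla f$ and $\tilde Lf=\Delta f+(\nabla\log\pi,\nabla f)$ into $\Gamma_{\mathcal I}$; that part is fine. For the second equality your decomposition $\mathfrak{Hess}f=\nabla^2f+B$, with $B_{ij}=-\tfrac14(\gamma_i\partial_jf+\gamma_j\partial_if)+\tfrac12\delta_{ij}(\nabla f,\gamma)$, is a tidier repackaging of the paper's entrywise completion of squares, and your cross-term identity $2\langle\nabla^2f,B\rangle_{\mathrm F}=-(\gamma,\nabla^2f\nabla f)+\Delta f(\nabla f,\gamma)$ is correct.

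The step you leave as a ``read off'' is, however, exactly where the argument does not close as stated. Writing $s=(\nabla f,\gamma)$ and $B=C+\tfrac{s}{2}\mathbb{I}$ with $C_{ij}=-\tfrac14(\gamma_i\partial_jf+\gamma_j\partial_if)$, one has $\mathrm{tr}(C)=-\tfrac{s}{2}$ and $\|C\|_{\mathrm F}^2=\tfrac18\big(\|\gamma\|^2\|\nabla f\|^2+s^2\big)$, hence
\begin{equation*}
\|B\|_{\mathrm F}^2=\frac18\|\gamma\|^2\|\nabla f\|^2+\Big(\frac18-\frac12+\frac{d}{4}\Big)s^2=\frac18\|\gamma\|^2\|\nabla f\|^2+\frac{2d-3}{8}s^2 .
\end{equation*}
The first line of the lemma then equals $\|\mathfrak{Hess}f\|^2_{\mathrm F}-\nabla^2\log\pi(\nabla f,\nabla f)-\|B\|_{\mathrm F}^2+(\nabla f,\nabla\log\pi)\,s$, and the residual $-\|B\|_{\mathrm F}^2+(\nabla f,\nabla\log\pi)\,s$ is the quadratic form with diagonal entries $\frac{3-2d}{8}\gamma_i^2-\frac18\sum_k\gamma_k^2+\gamma_i\partial_i\log\pi$ and off-diagonal entries $\frac{3-2d}{8}\gamma_i\gamma_j+\frac12(\gamma_i\partial_j\log\pi+\gamma_j\partial_i\log\pi)$; that is, it equals $+A(\nabla f,\nabla f)$ with $A$ exactly as printed, not $-A(\nabla f,\nabla f)$. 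So your computation actually yields $\mathfrak{R}=-\nabla^2\log\pi+A$. This is consistent with the final display of the paper's own proof and with the two-dimensional example (where $\mathfrak{R}_{11}=\partial_{11}U-\frac{\gamma_1^2}{4}-\frac{\gamma_2^2}{8}-\partial_1U\gamma_1$), so the sign in \eqref{R} appears to be a typo in the paper rather than a flaw in your method; but a proof must resolve this rather than assert the printed sign. Your $d=1$ check, with its hedged ``$\pm$'', is precisely where the sign should have been pinned down: there $\|B\|_{\mathrm F}^2=0$ and the residual is $+\gamma(\log\pi)'(f')^2$. Carry the $\|B\|_{\mathrm F}^2$ computation through explicitly and state which sign convention for $A$ makes the identity true.
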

\begin{remark}
Lemma \ref{lem2} is a generalization of Gamma calculus. If $\gamma=0$, it recovers the standard Bakry-{\'E}mery Gamma calculus
 \begin{equation*}
\Gamma_2(f,f)= \|\nabla^2 f\|^2_{\mathrm{F}}-\nabla^2\log \pi(\nabla f, \nabla f). 
 \end{equation*} 
 We notice that our dissipation rate is derived from the calculus based on Fisher information. For this reason, we call this derivation method the {\em information Gamma calculus}. 
\end{remark}
\begin{proof}
The proof is to complete a square for a quadratic form functional of $f$. On the one hand, 
\begin{equation}\label{step1}
\begin{split}
\Gamma_2(f,f)=&\frac{1}{2}\tilde L\Gamma_1(f,f)-\Gamma_1(\tilde L f, f)\\
=&\frac{1}{2}\Delta\|\nabla f\|^2-(\nabla \Delta f, \nabla f)+\frac{1}{2}(\nabla\log \pi, \nabla \|\nabla f\|^2)-(\nabla (\nabla\log \pi, \nabla f), \nabla f)\\
=&\|\nabla^2 f\|^2_{\mathrm{F}}+\nabla^2 f(\nabla^2\log \pi, \nabla f)-\nabla^2f(\nabla^2\log \pi, \nabla f)-\nabla^2\log \pi(\nabla f,\nabla f)\\
=&\|\nabla^2 f\|^2_{\mathrm{F}}-\nabla^2\log \pi(\nabla f, \nabla f),
\end{split}
\end{equation}
where the last equality applies the Bochner's formula, i.e. 
\begin{equation*}
\frac{1}{2}\Delta\|\nabla f\|^2-(\nabla \Delta f, \nabla f)=\|\nabla^2 f\|_{\mathrm{F}}^2. 
\end{equation*}
On the other hand, \begin{equation}\label{step2}
\begin{split}
\Gamma_{\mathcal{I}}(f,f)=&-\big(\gamma, \nabla\Gamma_1(f, f)\big)+\tilde Lf\cdot (\nabla f, \gamma)\\ 
=&-\big(\gamma, \nabla\|\nabla f\|^2\big)+\big(\Delta f+(\nabla f, \nabla \log \pi)\big) (\nabla f, \gamma)\\
=&-\nabla^2f(\gamma, \nabla f)+\Delta f(\nabla f, \gamma)+\big(\nabla f, \nabla \log \pi\big)(\nabla f, \gamma).
\end{split}
\end{equation}
Combining \eqref{step1} and \eqref{step2} together, we have
\begin{equation}\label{step3}
\begin{split}
\Gamma_2(f,f)+\Gamma_{\mathcal{I}}(f,f)=&\quad\|\nabla^2 f\|^2_{\mathrm{F}}-\nabla^2\log \pi(\nabla f, \nabla f)\\
&-\nabla^2f(\gamma, \nabla f)+\Delta f(\nabla f, \gamma)+(\nabla f, \nabla \log \pi)(\nabla f, \gamma).
\end{split}
\end{equation}
Finally, we derive the main result by completing the square in formulation \eqref{step3}. For simplicity of notations, denote 
\begin{equation*}
\partial_if=\frac{\partial}{\partial x_i}f,\quad \partial_{ij}f=\frac{\partial^2}{\partial x_i\partial x_j}f, \quad \partial_{ij}\log \pi=\frac{\partial^2}{\partial x_i\partial x_j}\log \pi.  
\end{equation*}
Under above notations, we reformulate \eqref{step3} explicitly as follows:
\begin{equation*}
\begin{split}
&\Gamma_2(f,f)+\Gamma_{\mathcal{I}}(f,f)\\=&\quad \sum_{i=1}^d|\partial_{ii}f|^2+2\sum_{1\leq i<j\leq d}|\partial_{ij}f|^2-\sum_{i=1}^d\sum_{j=1}^d\partial_{ij}\log \pi\partial_i f\partial_j f\\
&-\sum_{i=1}^d\sum_{j=1}^d \partial_{ij}f\gamma_i\partial_jf+(\sum_{i=1}^d\partial_{ii}f)(\sum_{j=1}^d\partial_jf\gamma_j) +(\sum_{i=1}^d\partial_i f\partial_i\log \pi)(\sum_{j=1}^d\partial_jf\gamma_j)\\
=&\quad\sum_{i=1}^d\Big(\partial_{ii}f-\frac{1}{2}\gamma_i\partial_if+\frac{1}{2}\sum_{j=1}^d\partial_jf\gamma_j\Big)^2+2\sum_{1\leq i<j\leq d}\Big(\partial_{ij}f-\frac{1}{4}(\gamma_i\partial_j f+\gamma_j\partial_i f)\Big)^2\\
&-\frac{1}{4}\sum_{i=1}^d\Big(\gamma_i\partial_if-\sum_{j=1}^d\partial_jf\gamma_j\Big)^2-\frac{1}{8}\sum_{1\leq i<j\leq d}\Big(\gamma_i\partial_jf+\gamma_j\partial_i f\Big)^2\\
&-\sum_{i=1}^d\sum_{j=1}^d\partial_{ij}\log \pi\partial_i f\partial_j f+(\sum_{i=1}^d\partial_i f\partial_i\log \pi)(\sum_{j=1}^d\partial_jf\gamma_j)\\
=&\quad\sum_{i=1}^d\Big(\partial_{ii}f+\frac{1}{2}\sum_{j\neq i}\partial_jf\gamma_j\Big)^2+2\sum_{1\leq i<j\leq d}\Big(\partial_{ij}f-\frac{1}{4}(\gamma_i\partial_j f+\gamma_j\partial_i f)\Big)^2\\
&-\frac{1}{4}\sum_{i=1}^d\Big(\sum_{j\neq i}\partial_jf\gamma_j\Big)^2-\frac{1}{8}\sum_{1\leq i<j\leq d}\Big(\gamma_i\partial_jf+\gamma_j\partial_i f\Big)^2\\
&-\sum_{i=1}^d\sum_{j=1}^d\Big(\partial_{ij}\log \pi-\partial_i\log \pi \gamma_j\Big)\partial_i f\partial_j f\\
=&\quad\sum_{i=1}^d\Big(\partial_{ii}f+\frac{1}{2}\sum_{j\neq i}\partial_jf\gamma_j\Big)^2+2\sum_{1\leq i<j\leq d}\Big(\partial_{ij}f-\frac{1}{4}(\gamma_i\partial_j f+\gamma_j\partial_i f)\Big)^2\\
&-\frac{1}{4}\sum_{i=1}^d\Big(\sum_{j\neq i}\partial_jf\gamma_j\Big)^2-\frac{1}{8}\sum_{1\leq i<j\leq d}\Big(\gamma_i^2(\partial_jf)^2+\gamma_j^2(\partial_i f)^2+2\gamma_i\gamma_j\partial_if\partial_jf\Big)\\
&-\sum_{i=1}^d\sum_{j=1}^d\Big(\partial_{ij}\log \pi-\frac{1}{2}(\partial_i\log \pi \gamma_j+\partial_j\log \pi \gamma_i)\Big)\partial_i f\partial_j f.
\end{split}
\end{equation*}
Hence
\begin{equation*}
    \begin{split}
\Gamma_2(f,f)+\Gamma_{\mathcal{I}}(f,f)=&\|\mathfrak{Hess}f\|_{\textrm{HS}}^2+\mathfrak{R}(\nabla f, \nabla f).
\end{split}
\end{equation*}

\end{proof}

\subsection{Main proof}
We are now ready to prove the main result. 
\begin{proof}[Proof of Theorem \ref{thm1}]
We apply the Lyapunov method in probability density space. From Lemma \ref{lem1}, we have 
\begin{equation*}
\begin{split}
\frac{d}{dt}\mathcal{I}(p_t)=&-2\int_\Omega \Big[\Gamma_2(\log\frac{p_t}{\pi},\log\frac{p_t}{\pi})+\Gamma_{\mathcal{I}}(\log\frac{p_t}{\pi},\log\frac{p_t}{\pi})\Big]p_t dx\\
=&-2\int_\Omega \Big[\|\mathfrak{Hess}\log\frac{p_t}{\pi}\|_{\mathrm{F}}^2+\mathfrak{R}(\nabla \log\frac{p_t}{\pi},\nabla\log\frac{p_t}{\pi})\Big] p_t dx\\
\leq &-2\lambda \int_\Omega\Gamma_1(\log\frac{p_t}{\pi}, \log\frac{p_t}{\pi})dx\\
=&-2\lambda \mathcal{I}(p_t). 
\end{split}
\end{equation*}
where the second inequality holds from Lemma \ref{lem2} with $f=\log\frac{p}{\pi}$, and the third inequality comes from condition \eqref{C}, which implies 
\begin{equation*}
\Gamma_2(f,f)+\Gamma_{\mathcal{I}}(f,f)\geq \lambda \Gamma_1(f,f). 
\end{equation*}
From the Gronwall inequality, we prove the result for the exponential convergence of $\mathcal{I}$. 
\end{proof}

\begin{proof}[Proof of Corollary \ref{col2}]
We first prove the following statement. 

\noindent{\textbf{Claim}:}
\begin{equation}\label{ed}
\frac{d}{dt}\mathrm{D}_{\mathrm{KL}}(p_t\|\pi)=-\mathcal{I}(p_t\|\pi),    
\end{equation}
where $p_t$ is a solution for Fokker-Planck equation \eqref{FPE}. In literature, we notice that the dissipation of KL divergence equals to the negative Fisher information. In other words, \eqref{ed} holds for the gradient system $b=-\nabla \log \pi$. Here we prove that \eqref{ed} is also true for non-gradient system. 
\begin{proof}[Proof of Claim]
Notice 
\begin{equation}\label{derive}
\begin{split}
 \frac{d}{dt}\mathrm{D}_{\mathrm{KL}}(p_t\|\pi)=&\frac{d}{dt}\int_\Omega p(t,x)\log\frac{p(t,x)}{\pi(x)}dx  \\
 =&\int_\Omega \partial_tp(t,x)\log\frac{p(t,x)}{\pi(x)}+p(t,x)\partial_t\log\frac{p(t,x)}{\pi(x)} dx\\
 =&\int_\Omega \partial_tp(t,x)\log\frac{p(t,x)}{\pi(x)}+p(t,x)\frac{\partial_tp(t,x)}{p(t,x)} dx\\
  =&\int_\Omega \partial_tp(t,x)\log\frac{p(t,x)}{\pi(x)}dx+\int_\Omega{\partial_tp(t,x)} dx.
\end{split}    
\end{equation}
Since $p_t$ satisfies the Fokker-Planck equation \eqref{FPE}, i.e. 
\begin{equation}\label{CC}
\begin{split}
\partial_tp(t,x)=&-\nabla\cdot(p(t,x) b(x))+\nabla\cdot(\nabla p(t,x))\\
=&-\nabla\cdot(p(t,x) b(x))+\nabla\cdot(p(t,x)\nabla \log p(t,x))\\
=&\nabla\cdot(p(t,x) (\nabla\log \pi(x)-b(x)))+\nabla\cdot(p(t,x)\nabla\log \frac{p(t,x)}{\pi(x)})\\
=&\nabla\cdot(p(t,x)\gamma(x))+\nabla\cdot(p(t,x)\nabla\log \frac{p(t,x)}{\pi(x)}),
\end{split}
\end{equation}
where we use the fact $p(t,x)\nabla \log p(t,x)=\nabla p(t,x)$ and denote $\gamma(x)=\nabla\log \pi(x)-b(x)$. Then 
\begin{equation*}
\int_\Omega \partial_t p(t,x)dx=0. 
\end{equation*}
Thus by substituting equation \eqref{CC} into \eqref{derive}, we have 
\begin{equation*}
\begin{split}
 \frac{d}{dt}\mathrm{D}_{\mathrm{KL}}(p_t\|\pi)=&\int_\Omega \partial_tp(t,x)\log\frac{p(t,x)}{\pi(x)}dx\\
 =&\quad \int_\Omega \nabla\cdot(p(t,x)\gamma(x)) \log\frac{p(t,x)}{\pi(x)}dx\\
 &+\int_\Omega \nabla\cdot(p(t,x)\nabla\log \frac{p(t,x)}{\pi(x)})\log\frac{p(t,x)}{\pi(x)}dx\\
 =&-\int_\Omega \Big(\nabla \log\frac{p(t,x)}{\pi(x)},  \gamma(x) \Big)p(x) dx\\
 &-\int_\Omega (\nabla\log \frac{p(t,x)}{\pi(x)}, \nabla \log\frac{p(t,x)}{\pi(x)}) p(t,x)dx,
\end{split}    
\end{equation*}
where the last equality holds by the integration by parts formulas. We also claim that 
\begin{equation*}
    \int_\Omega \Big(\nabla \log\frac{p(t,x)}{\pi(x)},  \gamma(x) \Big)p(t,x) dx=0. 
\end{equation*}
This is true since 
\begin{equation*}
\begin{split}
&\int_\Omega \Big(\nabla \log\frac{p(t,x)}{\pi(x)},  \gamma(x) \Big)p(x) dx\\
      =&\int_\Omega (\nabla \log p(t,x), \gamma(x))p(t,x)-(\nabla \log \pi(x), \gamma(x))p(t,x)dx\\
    =&\int_\Omega (\nabla  p(t,x), \gamma(x))-(\nabla \log \pi(x), \gamma(x))p(t,x)dx\\
    =&-\int_\Omega \Big(\nabla\cdot \gamma(x)-(\nabla \log \pi(x), \gamma(x))\Big)p(t,x)dx\\
    =& 0,
\end{split}
\end{equation*}
where we apply the integration by parts in the last equality and we use the fact that $\pi$ is the invariant measure, i.e. 
\begin{equation*}
0=\frac{1}{\pi(x)}\nabla\cdot(\pi(x)\gamma(x))=\nabla\cdot \gamma(x)-(\nabla \log \pi(x), \gamma(x)).   
\end{equation*}
This finishes the proof of the claim. 
\end{proof}
Based on Claim \eqref{ed} and the proof in Theorem \ref{thm1}, we have 
\begin{equation*}
\frac{d}{dt}\mathcal{I}(p_t\|\pi)\leq -2 \lambda \mathcal{I}(p_t\|\pi)=2\lambda \frac{d}{dt}\mathrm{D}_{\mathrm{KL}}(p_t\|\pi).   
\end{equation*}
By the integration w.r.t $[0,\infty)$ with $p(0,x)=p(x)$, we finish the proof of log-Sobolev inequality. 
\end{proof}

\begin{proof}[Proof of Corollary \ref{col3}]
The exponential decay of KL divergence follows the proof of Corollary \ref{col2}. Notice that 
\begin{equation*}
\frac{d}{dt}\mathrm{D}_{\textrm{KL}}(p_t\|\pi)=-\mathcal{I}(p_t\|\pi)\leq -2\lambda \mathrm{D}_{\mathrm{KL}}(p_t\|\pi).     
\end{equation*}
From the Gronwall inequality, we prove (i). We next apply the inequalities proved in Theorem 1 of \cite{OV} under our Corollary \ref{col2} and condition \eqref{C}. Notice that 
\begin{equation*}
W(p_t, \pi)\leq \sqrt{\frac{2}{\lambda}\mathrm{D}_{\mathrm{KL}}(p_t\|\pi)},
\end{equation*}
and 
\begin{equation*}
\int_\Omega \|p_t-\pi\|dx\leq \sqrt{2\mathrm{D}_{\mathrm{KL}}(p_t\|\pi)}.
\end{equation*}
Combining them with the result in Theorem \ref{thm1}, we finish the proof.
\end{proof}
\subsection{Connections with Arnold--Carlen tensor}\label{ACT}
In this subsection, we notice that there are reformulations of information Gamma calculus in the weak sense. In particular, it is also equivalent to the Arnold--Carlen generalized Bakry-{\'E}mery condition in the weak sense. 
\begin{proposition}[Weak formulation of information Gamma calculus]
The following equality holds. Denote $f(x)=\log\frac{p(x)}{\pi(x)}$, then 
\begin{equation*}
\begin{aligned}
\int_\Omega \Big(\Gamma_{2}(f, f)+\Gamma_{\mathcal{I}}(f,f)\Big)p(x)dx=\int_\Omega \Big(\|\nabla^2 f\|_{\mathrm{F}}^2+\mathfrak{R}_{\mathrm{AC}}(\nabla f,\nabla f)\Big)p(x)dx, 
\end{aligned}\
\end{equation*}
where $\mathfrak{R}_{\mathrm{AC}}(x)=(\mathfrak{R}_{AC}(x)_{ij})_{1\leq i,j\leq d}\in \mathbb{R}^{d\times d}$ is the Arnold--Carlen tensor \cite{AC} defined by 
\begin{equation*}
\mathfrak{R}_{AC}(x)_{ij}=-\frac{\partial^2}{\partial x_i\partial x_j}\log\pi(x)-\frac{1}{2}\Big(\frac{\partial}{\partial x_i}\gamma_j(x)+\frac{\partial}{\partial x_j}\gamma_i(x)\Big).
\end{equation*}
\end{proposition}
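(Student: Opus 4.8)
The plan is to integrate the pointwise Gamma identity of Lemma~\ref{lem2} against $p$ and to show that the only discrepancy between $\mathfrak R$ and $\mathfrak R_{\mathrm{AC}}$ — a first-order term in $\gamma$ — is a total divergence modulo the invariance constraint, hence integrates to zero against $p$. By \eqref{step3} one has, pointwise,
\[
\Gamma_2(f,f)+\Gamma_{\mathcal I}(f,f)=\|\nabla^2 f\|_{\mathrm{F}}^2-\nabla^2\log\pi(\nabla f,\nabla f)+\Theta_1+\Theta_2+\Theta_3,
\]
where $\Theta_1=-(\gamma,\nabla^2 f\,\nabla f)$, $\Theta_2=\Delta f\,(\nabla f,\gamma)$ and $\Theta_3=(\nabla f,\nabla\log\pi)(\nabla f,\gamma)$. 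Since the definition of the Arnold--Carlen tensor gives $\mathfrak R_{\mathrm{AC}}(\nabla f,\nabla f)=-\nabla^2\log\pi(\nabla f,\nabla f)-\sum_{i,j=1}^d\frac{\partial}{\partial x_i}\gamma_j\,\partial_i f\,\partial_j f$, the proposition is equivalent to the single weak identity
\[
\int_\Omega(\Theta_1+\Theta_2+\Theta_3)\,p\,dx=-\int_\Omega\Big(\sum_{i,j=1}^d\frac{\partial}{\partial x_i}\gamma_j\,\partial_i f\,\partial_j f\Big)p\,dx .
\]

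First I would integrate $\Theta_2$ by parts in the Laplacian; the Neumann boundary conditions discard the boundary contributions, exactly as in the proofs of Lemma~\ref{lem1} and Lemma~\ref{lem2}. This produces three pieces: one equals $\int_\Omega\Theta_1\,p\,dx$ after relabelling indices and using $\partial_{ij}f=\partial_{ji}f$; one is precisely the right-hand side of the weak identity; and one is $-\int_\Omega(\nabla f,\nabla p)(\nabla f,\gamma)\,dx$. For this last piece I would use $f=\log\frac p\pi$, so that $\nabla p=p\,(\nabla f+\nabla\log\pi)$, splitting it into $-\int_\Omega\|\nabla f\|^2(\nabla f,\gamma)\,p\,dx-\int_\Omega\Theta_3\,p\,dx$. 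Collecting terms gives
\[
\int_\Omega(\Theta_1+\Theta_2+\Theta_3)\,p\,dx=2\int_\Omega\Theta_1\,p\,dx-\int_\Omega\|\nabla f\|^2(\nabla f,\gamma)\,p\,dx-\int_\Omega\Big(\sum_{i,j=1}^d\frac{\partial}{\partial x_i}\gamma_j\,\partial_i f\,\partial_j f\Big)p\,dx .
\]

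It then remains to verify the cancellation $2\int_\Omega\Theta_1\,p\,dx=\int_\Omega\|\nabla f\|^2(\nabla f,\gamma)\,p\,dx$. For this I would integrate $\Theta_1=-\sum_{j,k}\gamma_j\,\partial_{jk}f\,\partial_k f$ by parts once more, moving the derivative $\partial_j$ off the second derivative; the resulting self-referential term lets one solve for $2\int_\Omega\Theta_1\,p\,dx=\int_\Omega\|\nabla f\|^2\big[(\nabla\cdot\gamma)\,p+(\gamma,\nabla p)\big]\,dx$. Substituting again $\nabla p=p\,(\nabla f+\nabla\log\pi)$ together with the invariance identity $\nabla\cdot\gamma=-(\gamma,\nabla\log\pi)$ (which is $\nabla\cdot(\pi\gamma)=0$) collapses the bracket to $p\,(\gamma,\nabla f)$, giving the claimed cancellation and hence the proposition.

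Both integrations by parts are routine computations; the substantive point I would be careful about is that the extra first-order $\gamma$-term is killed only after invoking $\nabla\cdot(\pi\gamma)=0$ together with $f=\log\frac p\pi$, which is exactly why the identity holds only in the weak (integrated against $p$) sense and why it reconciles our tensor $\mathfrak R$ with $\mathfrak R_{\mathrm{AC}}$. One must also check that no boundary terms survive the integrations by parts under the Neumann conditions, as in the proof of Lemma~\ref{lem1}.
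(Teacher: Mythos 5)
Your argument is correct and is essentially the paper's own proof: both reduce the claim to showing $\int_\Omega \Gamma_{\mathcal{I}}(f,f)\,p\,dx=-\int_\Omega \nabla\gamma(\nabla f,\nabla f)\,p\,dx$ and establish this with the same two integrations by parts (on the $\Delta f\,(\nabla f,\gamma)$ term and on the $(\gamma,\nabla\Gamma_1(f,f))$ term), invoking $\nabla\cdot(p\gamma)=p(\nabla f,\gamma)$, which follows from $\nabla\cdot(\pi\gamma)=0$ and $f=\log\frac{p}{\pi}$. The only difference is bookkeeping: you track the terms $\Theta_1,\Theta_2,\Theta_3$ of \eqref{step3} and use a self-referential integration by parts for $\Theta_1$, while the paper expands $\nabla\cdot(\Gamma_1(f,f)\gamma)$ directly; the substance is identical.
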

\begin{proof}
Here we only need to prove 
\begin{equation*}
\int_\Omega \Gamma_{\mathcal{I}}(f,f) p(x)dx=\int_\Omega \mathfrak{R}_{\mathrm{AC}}(\nabla f,\nabla f) p(x)dx.
\end{equation*}
Notice that $\nabla\cdot(\pi \gamma)=0$, hence we have
\begin{equation*}
\nabla\cdot(p\gamma)=p(\nabla\log\frac{p}{\pi}, \gamma)=p(\nabla f, \gamma).
\end{equation*}
Then
\begin{equation*}
\begin{split}
\int_\Omega \Gamma_{\mathcal{I}}(f,f) p dx=&\int_\Omega \Big\{\Delta f(\nabla f, \gamma)+(\nabla\log \pi, \nabla f)(\nabla f, \gamma)+\frac{1}{2}\Gamma_1(f,f)(\nabla f, \gamma)\Big\}p dx. 
\end{split}
\end{equation*}
We observe that 
\begin{equation*}
\begin{split}
\int_\Omega \Delta f (\nabla f, \gamma)pdx=&\int_\Omega -(\nabla f, \nabla(p(\nabla f, \gamma))) dx\\
=& -\int_\Omega \Big\{(\nabla f,\nabla \log p)(\nabla f, \gamma)+\nabla^2f(\nabla f, \gamma)+\nabla\gamma(\nabla f, \nabla f)\Big\}p dx\\
\end{split}
\end{equation*}
Hence 
\begin{equation*}
\begin{split}
&\int_\Omega \Gamma_{\mathcal{I}}(f,f) p dx\\=&\int_\Omega -\frac{1}{2}\Gamma_1(f,f)(\nabla f, \gamma)p-\Big\{\nabla^2f(\nabla f, \gamma)+\nabla\gamma(\nabla f, \nabla f)\Big\} pdx\\
=&\int_\Omega -\frac{1}{2}\Gamma_1(f,f)(\nabla p, \gamma)+\frac{1}{2}\Gamma_1(f,f)(\nabla \log\pi, \gamma)p-\Big\{\nabla^2f(\nabla f, \gamma)+\nabla\gamma(\nabla f, \nabla f)\Big\} pdx\\
=&\int_\Omega \frac{1}{2}\nabla\cdot(\Gamma_1(f,f)\gamma)p+\frac{1}{2}\Gamma_1(f,f)(\nabla\log \pi,\gamma)p-\Big\{\nabla^2f(\nabla f, \gamma)+\nabla\gamma(\nabla f, \nabla f)\Big\} pdx\\
=&\int_\Omega \nabla^2f(\nabla f, \gamma)p+\frac{1}{2}\Gamma_1(f,f)(\nabla\cdot\gamma+(\nabla\log \pi, \gamma))p-\Big\{\nabla^2f(\nabla f, \gamma)+\nabla\gamma(\nabla f, \nabla f)\Big\}p dx\\
=&-\int_\Omega \nabla\gamma(\nabla f, \nabla f)p dx. 
\end{split}
\end{equation*}
\end{proof}
\section{Non-reversible Poincar{\'e} inequality}\label{sec31}
In this section, we formulate the proof of non-reversible Poincar{\'e} inequality. This corresponds to the asymptotic formulation of information Gamma calculus. 

To do so, we first derive an equality, using the weak form of information Gamma calculus.
\begin{lemma}\label{lem2}
For any $\Phi\in C^\infty(\Omega)$, the following equality holds.
\begin{equation}\label{Yano}
\int_\Omega \Big(\Gamma_2(\Phi, \Phi)+\Gamma_{\mathcal{I}}(\Phi,\Phi)\Big) \pi(x)dx=\int_\Omega \tilde L \Phi(x) \Big(\tilde L\Phi(x)+(\nabla \Phi(x), \gamma(x))\Big)\pi(x) dx.
\end{equation}
\end{lemma}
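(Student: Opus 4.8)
The plan is to integrate the two operators $\Gamma_2(\Phi,\Phi)$ and $\Gamma_{\mathcal I}(\Phi,\Phi)$ against $\pi$ separately, using two structural facts about the data: first, that $\tilde L$ is symmetric in $L^2(\pi)$, so that $\int_\Omega (\tilde Lh)\,g\,\pi\,dx=-\int_\Omega(\nabla h,\nabla g)\,\pi\,dx$ for all $h,g\in C^\infty(\Omega)$ under the Neumann boundary condition (equivalently $\tilde L^*\pi=0$, which holds since $\pi\nabla\log\pi=\nabla\pi$, and in particular $\int_\Omega \tilde L(\cdot)\,\pi\,dx=0$); and second, that $\gamma$ is $\pi$-divergence free, $\nabla\cdot(\pi\gamma)=0$, together with the compatible no-flux condition $(\pi\gamma)\cdot n=0$ on $\partial\Omega$ that makes $\pi$ invariant for the full dynamics.

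First I would treat $\int_\Omega \Gamma_2(\Phi,\Phi)\pi\,dx$. Writing $\Gamma_2(\Phi,\Phi)=\tfrac12\tilde L\Gamma_1(\Phi,\Phi)-\Gamma_1(\tilde L\Phi,\Phi)$, the term $\tfrac12\int_\Omega \tilde L\Gamma_1(\Phi,\Phi)\,\pi\,dx$ vanishes because $\int_\Omega \tilde L(\cdot)\,\pi\,dx=0$. For the remaining term I apply the symmetric integration-by-parts identity with $h=\Phi$, $g=\tilde L\Phi$, obtaining $-\int_\Omega \Gamma_1(\tilde L\Phi,\Phi)\,\pi\,dx=-\int_\Omega(\nabla\Phi,\nabla\tilde L\Phi)\,\pi\,dx=\int_\Omega(\tilde L\Phi)^2\,\pi\,dx$. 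Hence $\int_\Omega \Gamma_2(\Phi,\Phi)\,\pi\,dx=\int_\Omega(\tilde L\Phi)^2\,\pi\,dx$, which is the classical Bakry--\'Emery identity in $L^2(\pi)$.

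Next I would treat $\int_\Omega \Gamma_{\mathcal I}(\Phi,\Phi)\pi\,dx$. Recalling $\Gamma_{\mathcal I}(\Phi,\Phi)=-\tfrac12\big(\gamma,\nabla\Gamma_1(\Phi,\Phi)\big)+\tilde L\Phi\cdot(\nabla\Phi,\gamma)$, I integrate the first summand by parts: $-\tfrac12\int_\Omega\big(\gamma,\nabla\|\nabla\Phi\|^2\big)\pi\,dx=\tfrac12\int_\Omega\|\nabla\Phi\|^2\,\nabla\cdot(\pi\gamma)\,dx=0$ since $\nabla\cdot(\pi\gamma)=0$ and the boundary term drops by $(\pi\gamma)\cdot n=0$. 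Thus $\int_\Omega \Gamma_{\mathcal I}(\Phi,\Phi)\,\pi\,dx=\int_\Omega \tilde L\Phi\cdot(\nabla\Phi,\gamma)\,\pi\,dx$. Adding the two contributions gives $\int_\Omega\big(\Gamma_2(\Phi,\Phi)+\Gamma_{\mathcal I}(\Phi,\Phi)\big)\pi\,dx=\int_\Omega\tilde L\Phi\big(\tilde L\Phi+(\nabla\Phi,\gamma)\big)\pi\,dx$, which is \eqref{Yano}.

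The argument is essentially algebraic bookkeeping with the definitions of $\tilde L$, $\Gamma_1$, $\Gamma_2$, $\Gamma_{\mathcal I}$; the only point that genuinely requires care — and where I expect the (mild) technical work to lie — is justifying that the two integration-by-parts steps produce no boundary contributions, i.e. that the Neumann condition on $\Phi$ handles $\int_{\partial\Omega}(\tilde L\Phi)\,\pi\,\partial_n\Phi\,d\sigma$ and that the no-flux condition $(\pi\gamma)\cdot n=0$ handles $\int_{\partial\Omega}\|\nabla\Phi\|^2\,\pi\,(\gamma\cdot n)\,d\sigma$, both of which follow from smoothness on the compact $\Omega$ together with the standing assumptions that make $\pi$ invariant.
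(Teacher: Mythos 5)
Your proof is correct, and it reaches \eqref{Yano} by a different route than the paper. For the cross term you integrate $\Gamma_{\mathcal I}(\Phi,\Phi)$ against $\pi$ directly, killing $-\tfrac12(\gamma,\nabla\Gamma_1(\Phi,\Phi))$ via one integration by parts and $\nabla\cdot(\pi\gamma)=0$; the paper instead reuses the identity from Lemma \ref{lem1} (valid for $f=\log\frac{p}{\pi}$ integrated against a general density $p$), sets $\log\frac{p}{\pi}=\epsilon\Phi$, and lets $\epsilon\to 0$, so that the awkward term appears as $\epsilon^3\int\Gamma_1(\Phi,\Phi)(\nabla\Phi,\gamma)p\,dx$ and simply drops out in the limit rather than being integrated away. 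For the symmetric part you rederive $\int\Gamma_2(\Phi,\Phi)\pi\,dx=\int(\tilde L\Phi)^2\pi\,dx$ from the two Bakry--\'Emery integration-by-parts identities, whereas the paper quotes it, writing $\nabla\cdot(\pi\nabla\Phi)=\pi\tilde L\Phi$. Your argument is the more elementary and self-contained one, and it makes explicit exactly which structural inputs are used ($\tilde L$ symmetric in $L^2(\pi)$, $\nabla\cdot(\pi\gamma)=0$, no-flux boundary conditions); the paper's linearization argument is slightly less direct but fits its broader theme, exhibiting the identity as the asymptotic (Hessian-at-$\pi$) form of the information Gamma calculus of Lemma \ref{lem1}. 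Your closing remark about boundary terms is apt: neither proof can avoid the standing Neumann and $(\pi\gamma)\cdot n=0$ assumptions, since the paper's Lemma \ref{lem1} already uses the same integrations by parts at the level of general $p$.
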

\begin{proof}
Firstly, notice that $\gamma$ satisfies $\nabla\cdot\gamma=-(\nabla\log \pi, \gamma)$. Hence 
\begin{equation*}
\nabla\cdot(p\gamma)=p\Big((\nabla\log p, \gamma)+\nabla\cdot \gamma)=p(\nabla\log\frac{p}{\pi}, \gamma). 
\end{equation*}
From the derivation of Lemma \ref{lem1}, we have
\begin{equation}\label{h1}
\begin{split}
\int_\Omega \Gamma_{\mathcal{I}}(\log\frac{p}{\pi}, \log\frac{p}{\pi})p dx=&\int_\Omega \Big(\tilde L\log\frac{p}{\pi}+\frac{1}{2}\Gamma_1(\log\frac{p}{\pi}, \log\frac{p}{\pi})\Big)\nabla\cdot(p\gamma) dx\\
=&\int_\Omega \Big(\tilde L\log\frac{p}{\pi}+\frac{1}{2}\Gamma_1(\log\frac{p}{\pi}, \log\frac{p}{\pi})\Big)(\nabla\log\frac{p}{\pi}, \gamma)p dx.
\end{split}
\end{equation}
Given a constant $\epsilon\in \mathbb{R}$, denote 
\begin{equation}\label{h}
\log\frac{p(x)}{\pi(x)}=\epsilon \Phi(x).
\end{equation}
Clearly, $p(x)=\pi(x)e^{\epsilon \Phi(x)}$ and $p(x)=\pi(x)$ if $\epsilon=0$. Substituting \eqref{h} into \eqref{h1}, we have
\begin{equation*}
\epsilon^2\int_\Omega \Gamma_{\mathcal{I}}( \Phi, \Phi)p dx=\epsilon^2\int_\Omega \tilde L\Phi (\nabla \Phi, \gamma) p dx+\epsilon^3\int_\Omega \Gamma_1(\Phi, \Phi)(\nabla \Phi, \gamma) p dx. 
\end{equation*}
Let $\epsilon$ shrink to zero, then 
\begin{equation*}
\int_\Omega \Gamma_{\mathcal{I}}( \Phi, \Phi)\pi dx=\int_\Omega \tilde L \Phi (\nabla \Phi, \gamma) \pi dx.
\end{equation*}
Secondly, following the fact that  
\begin{equation*}
\begin{split}
\int_\Omega \Gamma_{2}(\Phi, \Phi)\pi dx=&\int_\Omega \frac{\Big(\nabla\cdot(\pi\nabla \Phi)\Big)^2}{\pi}dx \\
=&\int_\Omega \Big(\frac{\nabla\cdot(\pi\nabla\Phi)}{\pi}\Big)^2\pi dx\\
=&\int_\Omega (\tilde L\Phi)^2\pi dx. 
\end{split}
\end{equation*}
From the above two facts, we have
\begin{equation*}
\begin{split}
\int_\Omega\Big(\Gamma_{2}(\Phi, \Phi)+\Gamma_{\mathcal{I}}(\Phi, \Phi)\Big)\pi dx=&\int_\Omega \tilde L \Phi \Big(\tilde L\Phi+(\nabla \Phi, \gamma)\Big)\pi dx.
\end{split}
\end{equation*}
This finishes the proof.
\end{proof}
\begin{remark}
For reversible SDEs, Lemma \ref{lem2} shows the Hessian operator of relative entropy in $L^2$--Wasserstein space at the invariant distribution \cite{LiG2}. Here we extend this formula in general non-gradient flows. See related geometric reasons in appendix. 
\end{remark}
From Lemma \ref{lem2}, we are now ready to prove the non-reversible Poincar{\'e} inequality. 
\begin{proof}[Proof of Theorem \ref{thm2}]
From the condition \eqref{C}, we have 
\begin{equation}\label{C1}
\int_\Omega \Big(\Gamma_2(\Phi, \Phi)+\Gamma_{\mathcal{I}}(\Phi, \Phi)\Big) \pi dx \geq \lambda \int_\Omega \Gamma_1(\Phi, \Phi)\pi dx. 
\end{equation}
From Lemma \ref{lem2}, we have 
\begin{equation*}
\begin{split}
\int_\Omega\Big(\Gamma_{2}(\Phi, \Phi)+\Gamma_{\mathcal{I}}(\Phi, \Phi)\Big)\pi dx=&\int_\Omega \tilde L \Phi \Big(\tilde L\Phi+(\nabla \Phi,\gamma)\Big)\pi dx\\
=&\int_\Omega \frac{\nabla\cdot(\pi\nabla \Phi)}{\pi} \frac{\nabla\cdot(\pi\nabla\Phi)+(\nabla \Phi,\gamma)\pi}{\pi}\pi dx\\
=&\int_\Omega \nabla\cdot(\pi\nabla \Phi)\cdot \frac{1}{\pi}\cdot \Big(\nabla\cdot(\pi\nabla\Phi)+(\nabla \Phi,\gamma)\pi\Big) dx\\
=&\int_\Omega \Big(\Phi, g_\mathrm{W}^{-1}(\pi)\cdot g_F(\pi)\cdot (-L^*_{\pi,\gamma}) \Phi\Big) dx.
\end{split}
\end{equation*}
In above, we denote the following operators. Denote the $L^2$--Wasserstein metric tensor by $g_\mathrm{W}(\pi)=-\nabla\cdot(\pi\nabla)$, i.e. 
\begin{equation*}
    g_\mathrm{W}(\pi)^{-1}\Phi=-\nabla\cdot(\pi\nabla\Phi),
\end{equation*}
and $g_F(\pi)$ is the Fisher-Rao metric tensor defined by 
\begin{equation*}
g_{\mathrm{F}}(\pi)\sigma=\frac{\sigma(x)}{\pi(x)}-\int_\Omega \sigma(x)dx,
\end{equation*}
for any $\sigma\in C^{\infty}(\Omega)$. In addition, define 
\begin{equation*}
L^*_{\pi,\gamma}\Phi=\nabla\cdot(\pi\nabla \Phi)+(\nabla \Phi, \gamma)\pi,
\end{equation*}
Notice 
\begin{equation*}
\int_\Omega\Gamma_1(\Phi, \Phi)\pi dx= \int_\Omega \Phi(-\Delta_\pi \Phi) dx. 
\end{equation*}
We remark that the above operator operation is defined on the tangent space in probability density space. See related discussions in appendix.

Thus the condition \eqref{C1} forms the following operator formulation
\begin{equation*}
g_\mathrm{W}(\pi)^{-1}\circ g_F(\pi)\circ (-L^*_{\pi,\gamma})\succeq \lambda g_\mathrm{W}(\pi)^{-1}.
\end{equation*}
Thus 
\begin{equation*}
-L^*_{\pi,\gamma}\succeq \lambda g_{\mathrm{F}}(\pi)^{-1}.
\end{equation*}
This means that for any $h\in C^{\infty}(\Omega)$, we have
\begin{equation*}
\int_\Omega (h, -L^*_{\pi,\gamma} h)dx\geq \lambda \int_\Omega (h, g_{\mathrm{F}}(\pi)^{-1} h) dx,
\end{equation*}
i.e. 
\begin{equation*}
\int_\Omega \Big(\|\nabla h(x)\|^2-h(x)(\nabla h(x), \gamma)\Big) \pi(x)dx \geq \lambda \int_\Omega \|h(x)-\int_\Omega h(y)\pi(y)dy\|^2 \pi(x) dx,
\end{equation*}
which finishes the proof.
\end{proof}
\begin{remark}
In the proof, we use the following linear algebra. Consider any  symmetric matrices $A$, $B\in\mathbb{R}^{d\times d}$, and a non-symmetric matrix $C\in\mathbb{R}^d$. Then
\begin{equation*}
ABC\succeq \lambda A\quad \Rightarrow \quad C\succeq \lambda B^{-1}.     
\end{equation*}
We extend this fact into the comparison of operators in $L^2(\Omega)$, where $A=g_{\mathbb{W}}(p)^{-1}$, $B=g_{\mathbb{F}}(p)$ and $C=-L_{\pi, \gamma}^*$.
\end{remark}

\section{Examples}\label{sec4}
In this section, we provide an example for non-revisable stochastic dynamics in two dimensional spatial domain. In this case, we obtain the explicit convergence rate. 

Consider a non-reversible overdamped Langevin dynamics \cite{non-reversible} by
\begin{equation}\label{IJ}
dX_t=-(\mathbb{I}+\mathbb{J})\nabla U(X_t)dt+\sqrt{2}dB_t,
\end{equation}
where $\mathbb{I}\in \mathbb{R}^{2\times 2}$ is an identity matrix and $\mathbb{J}=\begin{pmatrix}
0& c\\
-c& 0
\end{pmatrix}$ is a skew-symmetric matrix with $c\in\mathbb{R}$. In this case, denote $x=(x_1, x_2)$. Then the invariant distribution satisfies
\begin{equation*}
\pi(x)=\frac{1}{Z}e^{-U(x)}, 
\end{equation*}
where $Z$ is a normalization constant. Here $b=-(\mathbb{I}+\mathbb{J})\nabla U$ and $\gamma=\nabla\log \pi-b=\mathbb{J}\nabla U$. 
We next formulate the explicit convergence rate for SDE \eqref{IJ} as follows. 
\begin{proposition}
Denote 
\begin{equation*}
\begin{split}
&\mathfrak{R}(x)\\
=&\begin{pmatrix}
\partial_{x_1x_1}U-\frac{c^2}{8}(\partial_{x_1}U)^2-\frac{c^2}{4}(\partial_{x_2}U)^2-c\partial_{x_1}U\partial_{x_2}U& \partial_{x_1x_2}U+\frac{c^2}{8}{\partial_{x_1}U\partial_{x_2}U}-\frac{c}{2}\Big((\partial_{x_2}U)^2-(\partial_{x_1}U)^2\Big)\\
\partial_{x_1x_2}U+\frac{c^2}{8}\partial_{x_1}U\partial_{x_2}U-\frac{c}{2}\Big((\partial_{x_2}U)^2-(\partial_{x_1}U)^2\Big) & \partial_{x_2x_2}U-\frac{c^2}{4}(\partial_{x_1}U)^2-\frac{c^2}{8}(\partial_{x_2}U)^2+c\partial_{x_1}U\partial_{x_2}U
\end{pmatrix}.
\end{split}
\end{equation*}
Then the convergence rate $\lambda$ is the smallest eigenvalue of $\mathfrak{R}(x)$ for all $x\in \Omega$.  
\end{proposition}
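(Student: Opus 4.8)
The statement has two parts: the explicit formula for the tensor $\mathfrak R(x)$, and the identification of $\lambda$ as its smallest eigenvalue over $\Omega$; both follow by specializing the general machinery to the dynamics \eqref{IJ}, so the plan is a direct substitution followed by an application of Theorem \ref{thm1}. First I would read off the data of the SDE: since $\pi=\tfrac1Z e^{-U}$ we have $\nabla\log\pi=-\nabla U$, hence $-\nabla^2\log\pi=\nabla^2 U$ and $\partial_{x_i}\log\pi=-\partial_{x_i}U$; and from the definition \eqref{G} together with $b=-(\mathbb I+\mathbb J)\nabla U$ the non-gradient field is $\gamma=\nabla\log\pi-b=\mathbb J\nabla U$, so that $\gamma_1=c\,\partial_{x_2}U$ and $\gamma_2=-c\,\partial_{x_1}U$. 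In particular $\gamma_1^2=c^2(\partial_{x_2}U)^2$, $\gamma_2^2=c^2(\partial_{x_1}U)^2$, $\gamma_1\gamma_2=-c^2\,\partial_{x_1}U\,\partial_{x_2}U$, and $\sum_{k}\gamma_k^2=c^2\big((\partial_{x_1}U)^2+(\partial_{x_2}U)^2\big)$.

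Next I would assemble $A$ and then $\mathfrak R$. Plugging the above into the matrix $A$ in the definition of $\mathfrak R$ with $d=2$, so that $\tfrac{3-2d}{8}=-\tfrac18$, the entries are
\begin{equation*}
A_{ii}=-\tfrac18\gamma_i^2-\tfrac18\sum_{k}\gamma_k^2+\gamma_i\,\partial_{x_i}\log\pi,\qquad A_{12}=-\tfrac18\gamma_1\gamma_2+\tfrac12\big(\gamma_1\,\partial_{x_2}\log\pi+\gamma_2\,\partial_{x_1}\log\pi\big).
\end{equation*}
Substituting the expressions from the first step and collecting terms, each entry of $A$ splits into a $c^2$-quadratic-in-$\nabla U$ piece, a cross piece $c\,\partial_{x_1}U\,\partial_{x_2}U$ coming from the terms $\gamma_i\,\partial_{x_i}\log\pi$, and, off the diagonal, the piece $\tfrac c2\big((\partial_{x_2}U)^2-(\partial_{x_1}U)^2\big)$ coming from $\gamma_1\partial_{x_2}\log\pi+\gamma_2\partial_{x_1}\log\pi$. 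Then $\mathfrak R=-\nabla^2\log\pi-A=\nabla^2 U-A$ from \eqref{R} gives the claimed $2\times2$ matrix; note that each $\mathfrak R(x)$ is symmetric, since $\nabla^2 U$ and $A$ both are.

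Finally I would invoke the dissipation theorem. Set $\lambda:=\inf_{x\in\Omega}\lambda_{\min}\!\big(\mathfrak R(x)\big)$, which is real because each $\mathfrak R(x)$ is symmetric. If this infimum is positive it is precisely hypothesis \eqref{C}, so Theorem \ref{thm1} gives $\mathcal I(p_t\|\pi)\le e^{-2\lambda t}\mathcal I(p_0\|\pi)$, and Corollaries \ref{col2} and \ref{col3} then propagate the same rate to the relative entropy, the Wasserstein-$2$ distance, and the $L^1$ distance; this is the meaning of the last sentence of the Proposition.

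I do not expect a genuine obstacle here: the content is bookkeeping. The likeliest places for a slip are the sign of $\tfrac{3-2d}{8}=-\tfrac18$ at $d=2$, the sign $\partial_{x_i}\log\pi=-\partial_{x_i}U$ entering the cross terms, and the componentwise expansion of $\mathbb J\nabla U$. The one structural point worth recording is that $\mathfrak R(x)$ comes out symmetric, so that ``its smallest eigenvalue'' is unambiguous and Theorem \ref{thm1} applies verbatim.
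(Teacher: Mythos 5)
Your plan for the second half of the statement (set $\lambda=\inf_{x\in\Omega}\lambda_{\min}(\mathfrak R(x))$ and invoke Theorem \ref{thm1}) is fine and is what the paper intends. The gap is in the first half: the assertion that substituting $\gamma=\mathbb J\nabla U$ into Definition~1 ``gives the claimed $2\times2$ matrix'' is not true as stated, and you never exhibit the arithmetic that would have exposed this. Carry out your own recipe for the $(1,1)$ entry: with $d=2$,
\begin{equation*}
A_{11}=-\tfrac18\gamma_1^2-\tfrac18(\gamma_1^2+\gamma_2^2)+\gamma_1\partial_{x_1}\log\pi
=-\tfrac{c^2}{4}(\partial_{x_2}U)^2-\tfrac{c^2}{8}(\partial_{x_1}U)^2-c\,\partial_{x_1}U\,\partial_{x_2}U,
\end{equation*}
so $\mathfrak R_{11}=\partial_{x_1x_1}U-A_{11}=\partial_{x_1x_1}U+\tfrac{c^2}{8}(\partial_{x_1}U)^2+\tfrac{c^2}{4}(\partial_{x_2}U)^2+c\,\partial_{x_1}U\,\partial_{x_2}U$, which has the \emph{opposite} sign on every $\gamma$-dependent term compared with the Proposition. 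The same flip occurs in all four entries.

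The source of the discrepancy is an internal sign inconsistency in the paper: the completion of squares at the end of the proof of Lemma \ref{lem2} actually yields $\mathfrak R=-\nabla^2\log\pi+A$ (e.g.\ the diagonal contribution there is $-\partial_{ii}\log\pi+\gamma_i\partial_i\log\pi+\frac{3-2d}{8}\gamma_i^2-\frac18\sum_k\gamma_k^2$), whereas Definition~1 prints $\mathfrak R=-\nabla^2\log\pi-A$. The Proposition's matrix is consistent with the Lemma \ref{lem2} computation, not with the printed Definition. Accordingly, the paper's own proof does not route through Definition~1 at all: it restates the identity
$\Gamma_2(f,f)+\Gamma_{\mathcal I}(f,f)=\|\nabla^2f\|_{\mathrm F}^2+\nabla^2U(\nabla f,\nabla f)-\nabla^2f(\gamma,\nabla f)+\Delta f(\nabla f,\gamma)-(\nabla f,\nabla U)(\nabla f,\gamma)$
and redoes the completion of squares directly in $d=2$, obtaining $\mathfrak R$ in terms of $\gamma_1,\gamma_2$ before substituting $\gamma_1=c\,\partial_{x_2}U$, $\gamma_2=-c\,\partial_{x_1}U$. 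To make your argument correct you must either work from that identity as the paper does, or note and fix the sign of $A$ in Definition~1 before substituting; as written, your derivation proves a different matrix than the one claimed.
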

\begin{proof}
Denote $\gamma=\begin{pmatrix}
\gamma_1\\
\gamma_2
\end{pmatrix}$. We reformulate Lemma \ref{lem2} as follows. 
\begin{equation*}
\begin{split}
&\Gamma_2(f,f)+\Gamma_{\mathcal{I}}(f,f)\\
=&\|\nabla^2 f\|_{\mathrm{F}}^2+\nabla^2U(\nabla f, \nabla f)-\nabla^2f(\gamma, \nabla f)+\Delta f(\nabla f, \gamma)-(\nabla f, \nabla U)(\nabla f, \gamma)\\
=&\quad |\partial_{11}f|^2+2|\partial_{12}f|^2+|\partial_{22}f|^2+\nabla^2U(\nabla f, \nabla f)\\
&-\Big\{\partial_{11}f \partial_1f\gamma_1+\partial_{12}f(\gamma_1 \partial_2f+\gamma_2\partial_1f)+\partial_{22}f\gamma_2\partial_2f\Big\}\\
&+(\partial_{11}f+\partial_{22}f)(\partial_1f\gamma_1+\partial_2f\gamma_2)-(\partial_1f\partial_1U+\partial_2f\partial_2U)(\partial_1f\gamma_1+\partial_2f\gamma_2)\\
=&\quad(\partial_{11}f-\frac{1}{2}\partial_2f\gamma_2)^2+2(\partial_{12}f-\frac{\gamma_2\partial_1f+\gamma_1\partial_2f}{4})^2+(\partial_{22}f-\frac{1}{2}\partial_1f\gamma_1)^2\\
&+\Big(\partial_{11}U-\frac{\gamma_2^2}{8}-\frac{\gamma_1^2}{4}-\partial_1U\gamma_1\Big)(\partial_1f)^2+2\Big(\partial_{12}U-\frac{1}{2}(\partial_1U\gamma_2+\partial_2U\gamma_1)-\frac{1}{8}\gamma_1\gamma_2\Big)\partial_1f\partial_2f\\
&+\Big(\partial_{22}U-\frac{\gamma_2^2}{4}-\frac{\gamma_1^2}{8}-\partial_2U\gamma_2\Big)(\partial_2f)^2\\
=&\|\mathfrak{Hess}f\|_{\textrm{F}}^2+\mathfrak{R}(\nabla f,\nabla f).
\end{split}
\end{equation*}
In other words, we have
\begin{equation*}
\mathfrak{R}(x)=
\begin{pmatrix}
\partial_{11}U-\frac{\gamma_2^2}{8}-\frac{\gamma_1^2}{4}-\partial_{1}U \gamma_1& \partial_{12}U-\frac{\gamma_1 \gamma_2}{8}-\frac{\partial_{1}U \gamma_2+\partial_{2}U \gamma_1}{2}\\
 \partial_{12}U-\frac{\gamma_1 \gamma_2}{8}-\frac{\partial_{1}U\gamma_2+\partial_{2}U \gamma_1}{2} & \partial_{22}U-\frac{\gamma_2^2}{4}-\frac{\gamma_1^2}{8}-\partial_{2}U \gamma_2
\end{pmatrix}.
\end{equation*}
Using the fact $\gamma=\nabla\log \pi-b=\mathbb{J}\nabla U$, we prove the result.
\end{proof}

\begin{example}
Consider 
\begin{equation*}
U(x_1,x_2)=\frac{x_1^2+x_2^2}{2}.
\end{equation*}
Then 
\begin{equation*}
\mathfrak{R}(x)
=\begin{pmatrix}
1-\frac{c^2}{8}x_1^2-\frac{c^2}{4}x_2^2-cx_1x_2& \frac{c^2}{8}x_1x_2+\frac{c}{2}(x_1^2-x_2^2)\\
\frac{c^2}{8}x_1x_2+\frac{c}{2}(x_1^2-x_2^2)& 1-\frac{c^2}{4}x_1^2-\frac{c^2}{8}x_2^2+cx_1x_2
\end{pmatrix}.
\end{equation*}
Here we plot the smallest eigenvalue of $\mathfrak{R}(x)$ numerically, for different choices of $U$ for a given constant $c$.   
 \begin{figure}[H]
    \includegraphics[scale=0.25]{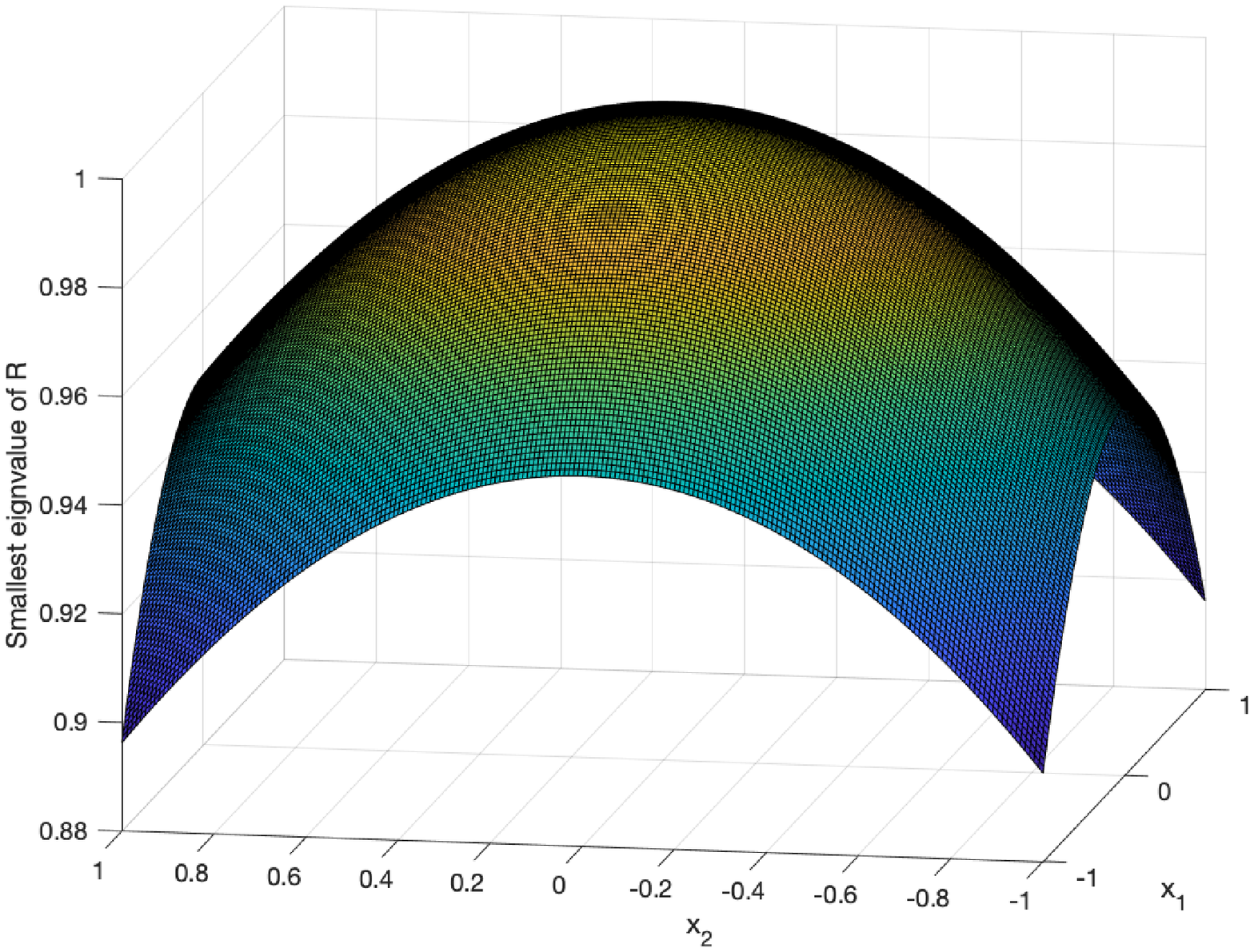}\includegraphics[scale=0.25]{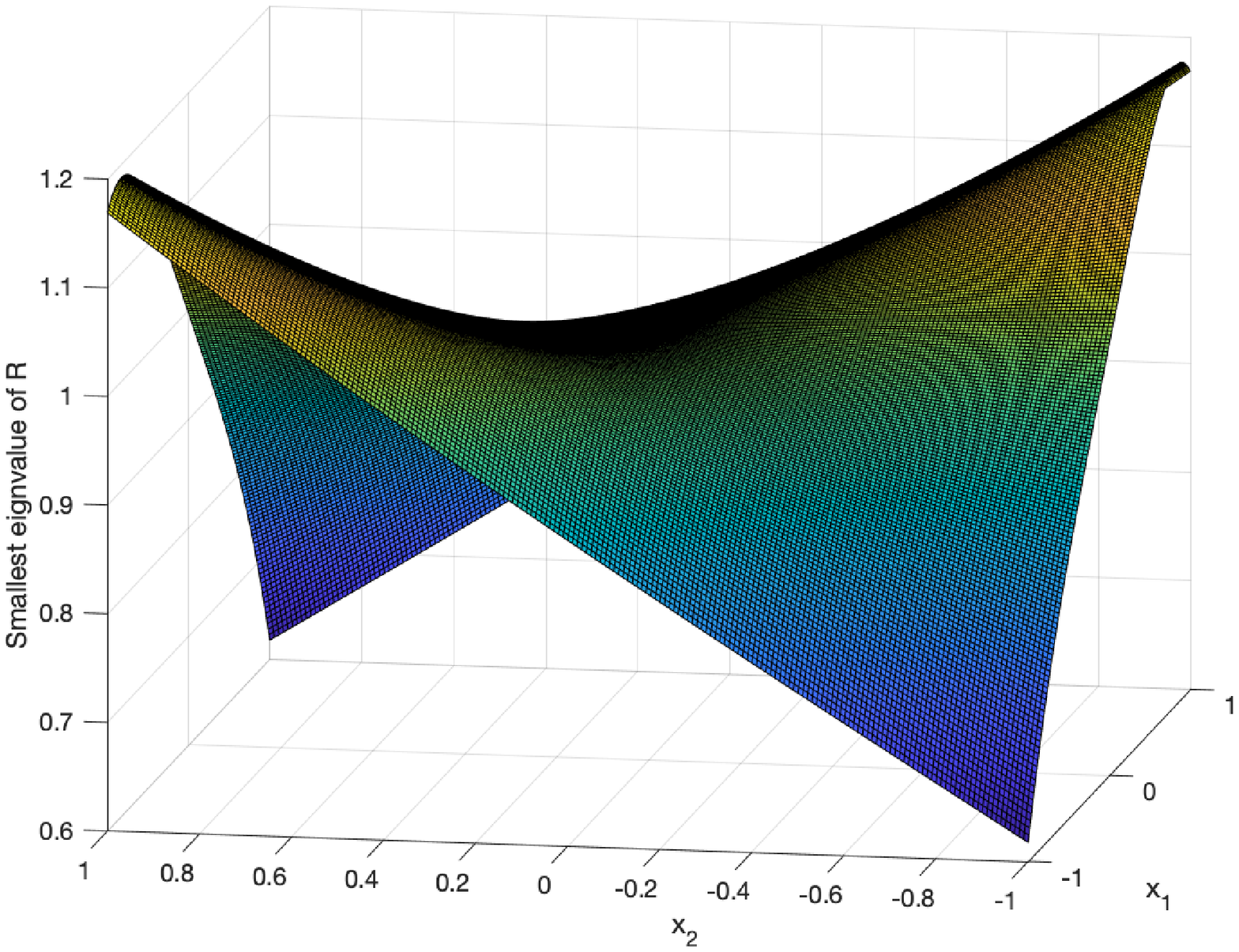}
    \caption{Illustration of the convergence rate depending on $c=0.1$ on a domain $[-1,1]^2$. Left is for $U(x)=\frac{x_1^2+x_2^2}{2}$. Right is for $U(x)=\frac{x_1^2+3x_2^2}{2}$.
    }
    \label{figure}
\end{figure}
From Figure \ref{figure}, we know that if $\lambda_{\min}(\mathfrak{R}(x))\geq \lambda>0$ on a region $\Omega$, then the relative Fisher information decays exponentially fast for \eqref{FPE} defined on $\Omega$. 
If $U(x)=\frac{x_1^2+x_2^2}{2}$, then adding the non-gradient drift does not improve the convergence rate, which is  $\lambda_{\min}(\nabla^2U)=1$. If $U(x)=\frac{x_1^2+3x_2^2}{2}$, then there exists a region, explicitly shown in the Figure \ref{figure}, where the related convergence rate and log-Sobolev inequality hold with a constant larger than $\lambda_{\min}(\nabla^2U)=1$. 
\end{example}

\section*{Appendix: Geometric calculations in $L^2$--Wasserstein space}
In appendix, we formulate the derivation of information Gamma calculus based on geometric calculations in $L^2$--Wasserstein space. 

\subsection{Non-reversible SDEs via perturbed Wasserstein gradient flows}
We firstly reformulate non-reversible SDE in $L^2$--Wasserstein space. This explains the derivation of this paper. Consider the non-reversible SDE \eqref{a} by
\begin{equation*}
d X=(\nabla \log\pi-\gamma)dt+\sqrt{2}dB_t. 
\end{equation*}
Denote $X\sim p$, where $p(t,x)$ is the probability density function of stochastic process \eqref{a}. We observe that 
\begin{equation*}
d\tilde X=\Big(\nabla\log\pi-\gamma-\nabla\log p\Big)dt,
\end{equation*}
provides the same probability transition equation for the Fokker-Planck equation of \eqref{a}. A further reformulation of SDE \eqref{a} is as follows. Define $\phi$ as a vector field satisfying the elliptical equation 
\begin{equation*}
\nabla\cdot(p \nabla\phi)=\nabla\cdot(p\gamma). 
\end{equation*}
Consider
\begin{equation*}\label{reformula}
d\tilde{\tilde X}=\Big(-\nabla\log\frac{p}{\pi}+\nabla\phi\Big)dt. 
\end{equation*}
We notice that the Kolmogorov forward equations for processes $X$, $\tilde X$ and $\tilde{\tilde X}$ satisfy the same Fokker-Planck equation:
\begin{equation*}
\begin{split}
\partial_tp=&\Delta p-\nabla\cdot(p\nabla\log \pi)+\nabla\cdot(p\gamma)\\
=&\nabla\cdot(p\nabla\log p)-\nabla\cdot(p\nabla\log \pi)+\nabla\cdot(p\gamma)\\
=&\nabla\cdot(p\nabla(\log\frac{p}{\pi}+\phi)),
\end{split}
\end{equation*}
where we apply the fact that $p\nabla\log p=\nabla p$ in the second equality. 
\subsection{Geometric calculations in $L^2$--Wasserstein space}
We next explain information Gamma calculus by using perturbed gradient flow in $L^2$--Wasserstein space. 

We first recall the metric and the gradient operator in $L^2$-Wasserstein space \cite{OV, Villani2009_optimal}. Denote the space of smooth positive probability densities by 
\begin{equation*}
\mathcal{P}=\Big\{p(x)\in C(\Omega)\colon \int_\Omega p(x)dx=1,\quad p(x)>0\Big\}.
\end{equation*}
Given a density $p\in \mathcal{P}$, the tangent space of $\mathcal{P}$ forms 
\begin{equation*}
T_p\mathcal{P}=\Big\{\sigma(x)\in C(\Omega)\colon \int_\Omega \sigma(x)dx=0\Big\}.
\end{equation*}
Given $p\in\mathcal{P}$, define the elliptical operator by
\begin{equation*}
\Delta_p=\nabla_x\cdot(p(x)\nabla_x)~\colon~C^{\infty}(\Omega)\rightarrow C^{\infty}(\Omega). 
\end{equation*}
\begin{definition}[$L^2$--Wasserstein metric]
The $L^2$--Wasserstein metric refers to the following bilinear norm $g_{\mathrm{W}}\colon \mathcal{P}\times T_p\mathcal{P}\times T_p\mathcal{P}\rightarrow\mathbb{R}$, such that for any $\sigma_i\in T_p\mathcal{P}$, $i=1,2$,
\begin{equation}\label{metric}
g_{\mathrm{W}}(p)(\sigma_1, \sigma_2)=\int_\Omega (\sigma_1, (-\Delta_p)^{-1}\sigma_2)dx.
\end{equation}
\end{definition}
A known fact is that the gradient operator of KL divergence is given by 
\begin{equation*}
\begin{split}
\textrm{grad}_{\mathrm{W}}\mathrm{D}_{\mathrm{KL}}=&\Big((-\Delta_p)^{-1}\Big)^{-1}\delta_p\mathrm{D}_{\mathrm{KL}}
=-\Delta p+\nabla\cdot(p\nabla\log \pi).
\end{split}
\end{equation*}
where $\delta_p$ is the $L^2$ first variation operator w.r.t. $p$ and $\delta_p\mathrm{D}_{\mathrm{KL}}=\log\frac{p}{\pi}+1$. This operator corresponds to the Fokker-Planck equation of reversible SDEs. In this notation, 
the Fisher information forms 
\begin{equation*}
\begin{split}
\mathcal{I}(p\|\pi)=\int_\Omega \|\nabla\log\frac{p}{\pi}\|^2 p dx=g_{\mathrm{W}}(p)(\textrm{grad}_{\mathrm{W}}\mathrm{D}_{\textrm{KL}}, \textrm{grad}_{\mathrm{W}}\mathrm{D}_{\textrm{KL}}). 
\end{split}
\end{equation*}

We next reformulate the Fokker-Planck equation for non-reversible SDE \eqref{a} by
\begin{equation}\label{PFPE}
\begin{split}
\partial_t p
=&-\textrm{grad}_{\mathrm{W}}\mathrm{D}_{\mathrm{KL}}+\nabla\cdot(p\gamma),
\end{split}
\end{equation}
Hence we can view \eqref{PFPE} as the perturbed gradient flow. And our derivation uses the following geometric calculations in $L^2$--Wasserstein space. 
\begin{proposition}
Denote  $\phi$ as a vector field satisfying the elliptical equation 
\begin{equation*}
\nabla\cdot(p \nabla\phi)=\nabla\cdot(p\gamma)=p(\nabla f, \gamma),
\end{equation*}
where $f=\log\frac{p}{\pi}$. Then 
\begin{equation*}
\begin{split}
\mathrm{Hess}_{\mathrm{W}}\mathrm{D}_{\mathrm{KL}}(f,\phi)=&\int_\Omega \Gamma_{\mathcal{I}}(f,f)p(x)dx,
\end{split}
\end{equation*}
where $\mathrm{Hess}_{\mathrm{W}}$ is the Hessian operator defined in the cotangent bundle of $L^2$--Wasserstein space; see details in \cite[Proposition 20]{Li2018_geometrya}. 
\end{proposition}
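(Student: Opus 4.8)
The plan is to reduce the identity to the known formula for the Hessian of the relative entropy in $L^2$--Wasserstein space. By \cite[Proposition 20]{Li2018_geometrya}, regarded as a bilinear form on the cotangent fiber at $p$, the Hessian of the KL divergence is
\begin{equation*}
\mathrm{Hess}_{\mathrm{W}}\mathrm{D}_{\mathrm{KL}}(\Phi_1,\Phi_2)=\int_\Omega\Big(\nabla^2\Phi_1:\nabla^2\Phi_2-\nabla^2\log\pi(\nabla\Phi_1,\nabla\Phi_2)\Big)p(x)\,dx=\int_\Omega\Gamma_2(\Phi_1,\Phi_2)p(x)\,dx,
\end{equation*}
where the second equality is the polarization of the Bochner computation \eqref{step1}. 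Substituting $\Phi_1=f=\log\frac{p}{\pi}$ and $\Phi_2=\phi$, the claim becomes the integration-by-parts identity $\int_\Omega\Gamma_2(f,\phi)p\,dx=\int_\Omega\Gamma_{\mathcal{I}}(f,f)p\,dx$, which is what I would actually prove.

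To prove this, I would expand $\Gamma_2(f,\phi)=\frac{1}{2}\tilde L\Gamma_1(f,\phi)-\frac{1}{2}\Gamma_1(\tilde Lf,\phi)-\frac{1}{2}\Gamma_1(f,\tilde L\phi)$, integrate against $p$, and repeatedly use the dual relation \eqref{dual} together with the defining equation $\nabla\cdot(p\nabla\phi)=\nabla\cdot(p\gamma)$. Two observations make the expression collapse. First, the invariance $\nabla\cdot(\pi\gamma)=0$ gives $\nabla\cdot(p\gamma)=p(\nabla f,\gamma)$, and dividing the elliptic equation by $p$ then yields the explicit formula $\tilde L\phi=(\nabla f,\gamma)-(\nabla f,\nabla\phi)$ for the otherwise only implicitly defined $\phi$. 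Second, $\tilde L^*p=\nabla\cdot(p\nabla f)$. Feeding these in, the genuinely second-order contributions of $\phi$ (which enter through $\tilde L\Gamma_1(f,\phi)$ and through the $(\nabla f,\nabla\phi)$ part of $\tilde L\phi$) cancel in pairs after one integration by parts, and what survives is a first-order functional of $f$ and $\gamma$ only. I would then identify this remainder with $\int_\Omega\Gamma_{\mathcal{I}}(f,f)p\,dx$, most economically by matching it against the already-derived identity $\int_\Omega\Gamma_{\mathcal{I}}(f,f)p\,dx=-\int_\Omega\nabla\gamma(\nabla f,\nabla f)p\,dx$ obtained in the proof of the weak-formulation Proposition.

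A second, more geometric route stays within the appendix picture: the non-reversible Fokker--Planck equation is the perturbed gradient flow $\partial_tp=-\mathrm{grad}_{\mathrm{W}}\mathrm{D}_{\mathrm{KL}}+\nabla\cdot(p\gamma)$ with $\mathcal{I}(p\|\pi)=\|\mathrm{grad}_{\mathrm{W}}\mathrm{D}_{\mathrm{KL}}\|_{\mathrm{W}}^2$, so differentiating $\mathcal{I}$ along the flow gives $\frac{d}{dt}\mathcal{I}=-2\,\mathrm{Hess}_{\mathrm{W}}\mathrm{D}_{\mathrm{KL}}(\mathrm{grad}_{\mathrm{W}}\mathrm{D}_{\mathrm{KL}},\mathrm{grad}_{\mathrm{W}}\mathrm{D}_{\mathrm{KL}})+2\,\mathrm{Hess}_{\mathrm{W}}\mathrm{D}_{\mathrm{KL}}(\nabla\cdot(p\gamma),\mathrm{grad}_{\mathrm{W}}\mathrm{D}_{\mathrm{KL}})$; comparing with Lemma \ref{lem1} and with the classical identity $\mathrm{Hess}_{\mathrm{W}}\mathrm{D}_{\mathrm{KL}}(\mathrm{grad}_{\mathrm{W}}\mathrm{D}_{\mathrm{KL}},\mathrm{grad}_{\mathrm{W}}\mathrm{D}_{\mathrm{KL}})=\int_\Omega\Gamma_2(f,f)p\,dx$ isolates the cross term, and translating the tangent vectors $\mathrm{grad}_{\mathrm{W}}\mathrm{D}_{\mathrm{KL}}=-\Delta_pf$ and $\nabla\cdot(p\gamma)=-\Delta_p(-\phi)$ into their cotangent potentials $f$ and $-\phi$, together with the symmetry of the Hessian, gives the result. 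The main obstacle I anticipate is the integration-by-parts bookkeeping in the first route: since $\phi$ is only accessible through the elliptic equation, one must carry derivatives until $\phi$ appears solely inside $\nabla\cdot(p\nabla\phi)$ before the substitution by first-order quantities in $f$ and $\gamma$ is legitimate, and tracking which second-order $\phi$-terms cancel is the delicate point; in the geometric route the analogous subtlety is fixing the orientation conventions in the tangent--cotangent identification so that the signs come out right.
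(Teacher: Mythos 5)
Your ``second, more geometric route'' is precisely the paper's proof: the authors write the perturbed flow as $\partial_tp=\nabla\cdot(p\nabla(f+\phi))$, differentiate $\mathcal{I}=g_{\mathrm{W}}(\mathrm{grad}_{\mathrm{W}}\mathrm{D}_{\mathrm{KL}},\mathrm{grad}_{\mathrm{W}}\mathrm{D}_{\mathrm{KL}})$ to get $-2\big(\mathrm{Hess}_{\mathrm{W}}\mathrm{D}_{\mathrm{KL}}(f,f)+\mathrm{Hess}_{\mathrm{W}}\mathrm{D}_{\mathrm{KL}}(f,\phi)\big)$, and identify the cross term by comparison with Lemma \ref{lem1} and the classical identity $\mathrm{Hess}_{\mathrm{W}}\mathrm{D}_{\mathrm{KL}}(f,f)=\int_\Omega\Gamma_2(f,f)p\,dx$; your tangent--cotangent sign bookkeeping ($\nabla\cdot(p\gamma)=-\Delta_p(-\phi)$, hence potential $-\phi$) is correct and is indeed the only delicate point. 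Your first route is a genuinely different and in principle more self-contained argument, but note what it additionally commits you to: (a) the bilinear formula $\mathrm{Hess}_{\mathrm{W}}\mathrm{D}_{\mathrm{KL}}(\Phi_1,\Phi_2)=\int_\Omega\Gamma_2(\Phi_1,\Phi_2)p\,dx$ at a \emph{general} $p$ (true by polarizing the displacement-convexity computation, but it is exactly the content of the cited Proposition 20, so you gain little independence there), and (b) an honest verification of $\int_\Omega\Gamma_2(f,\phi)p\,dx=\int_\Omega\Gamma_{\mathcal{I}}(f,f)p\,dx$, which you only sketch. The cancellation you anticipate is real, but be aware that the step used in the paper's Claim (i) --- moving $\tfrac12\Gamma_1$ onto $p$ via the $\tilde L/\tilde L^*$ duality --- is not available for the direction $\nabla\cdot(p\nabla\phi)$, since that is not $\tilde L^*$ of anything simple; you must instead integrate by parts directly and use $\tilde L\phi=(\nabla f,\gamma)-(\nabla f,\nabla\phi)$ as you indicate, and then match the residue against $-\int_\Omega\nabla\gamma(\nabla f,\nabla f)p\,dx$ from the weak-formulation proposition. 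So the first route would constitute an independent check of the proposition rather than a shortcut; the second route is the one the paper actually takes and is the economical choice.
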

\begin{proof}
The proof follows a direct calculation. Notice that Fokker-Planck equation \eqref{FPE} forms \eqref{PFPE}, i.e. 
\begin{equation*}
\partial_t p=-(-\Delta_p)(\log\frac{p}{\pi}+\phi)=\nabla\cdot\Big(p\nabla(f+\phi)\Big).
\end{equation*}
Along the Fokker-Planck equation \eqref{FPE}, we have
\begin{equation*}
\begin{split}
\frac{d}{dt}\mathcal{I}(p\|\pi)=&\frac{d}{dt}g_{\mathrm{W}}(\textrm{grad}_{\mathrm{W}}\mathrm{D}_{\mathrm{KL}}, \textrm{grad}_{\mathrm{W}}\mathrm{D}_{\mathrm{KL}})\\
=&2\textrm{Hess}_{\mathrm{W}}\mathrm{D}_{\mathrm{KL}}(-(f+\phi), f)\\
=&-2\Big(\textrm{Hess}_{\mathrm{W}}\mathrm{D}_{\mathrm{KL}}(f,f)+\textrm{Hess}_{\mathrm{W}}\mathrm{D}_{\mathrm{KL}}(f,\phi)\Big).
\end{split}
\end{equation*}
From the Hessian operator in $L^2$--Wasserstein space \cite{Li2018_geometrya, LiG2, Villani2009_optimal}, we have
\begin{equation*}
\textrm{Hess}_{\mathrm{W}}\mathrm{D}_{\mathrm{KL}}(f,f)=\int_\Omega \Gamma_2(f,f)p dx. 
\end{equation*}
And the information Gamma operator is the remaining term, i.e. 
\begin{equation*}
\textrm{Hess}_{\mathrm{W}}\mathrm{D}_{\mathrm{KL}}(f,\phi)=\int_\Omega \Gamma_{\mathcal{I}}(f,f)p dx. 
\end{equation*}
\end{proof}
Finally, we summarize that the information Gamma calculus satisfies  
\begin{equation*}
\begin{split}
\textrm{Hess}_{\mathrm{W}}\mathrm{D}_{\mathrm{KL}}(f,\phi+f)=&\int_\Omega \Big(\Gamma_2(f,f)+\Gamma_{\mathcal{I}}(f,f)\Big)p dx\\
=&\int_\Omega \Big(\|\mathfrak{Hess}f\|^2_{\mathrm{F}}+\mathfrak{R}(\nabla f, \nabla f)\Big) p dx\\
=&\int_\Omega \Big(\|\nabla^2 f\|^2+\mathfrak{R}_{\mathrm{AC}}(\nabla f, \nabla f)\Big)p dx.
\end{split}
\end{equation*}
It is a Hessian operator of KL divergence for asymmetric vectors in $L^2$--Wasserstein space. Due to the asymmetry nature, there exists multiple formulations of tensors.  

\begin{thebibliography}{10}

\bibitem{AM2019}
S. Armstrong and J. Mourrat.
\newblock {Variational methods for the kinetic Fokker-Planck equation}.
\newblock {\em	arXiv:1902.04037}, 2019.

\bibitem{AC}
 A. Arnold, E. Carlen. 
\newblock{A generalized Bakry-{\'E}mery condition for non-symmetric diffusions.}
\newblock{\em  EQUADIFF 99 Proceedings of the International Conference on Differential Equations,
Berlin 1999, World Scientific Publishing}, 732-734, 2000.

\bibitem{ACJ}
A. Arnold, E. Carlen, Q. Ju.
\newblock{Large-time behavior of non-symmetric FokkerPlanck type equations.}
\newblock{\em Communications on Stochastic Analysis }, 4-1, 2008.


\bibitem{AE}
A. Arnold and J. Erb. 
\newblock{Sharp entropy decay for hypocoercive and non-symmetric Fokker-Planck equations with linear drift.}
\newblock{ arXiv preprint arXiv:1409.5425,} 2014.

\bibitem{BE}
D.~Bakry and M.~{\'E}mery.
\newblock {Diffusions hypercontractives}.
\newblock {\em S{\'e}minaire de probabilit{\'e}s de Strasbourg}, 19:177--206,
  1985.
 
  
 \bibitem{BE2008}
D.~Bakry, P. Cattiaux and A. Guillin.
\newblock {Rate of convergence for ergodic continuous Markov processes: Lyapunov versus Poincar{\'e}}.
\newblock {\em Journal of Functional Analysis}, 254(3), 727-759, 2008.


\bibitem{BGL}
D.~Bakry, I. Gentil and M. Ledoux.
\newblock{Analysis and geometry of Markov diffusion operators} Vol 348, 2013.
\newblock{\em Springer Science \& Business Media},


 \bibitem{BGH2019}
F. Baudoin, M. Gordina, and D. P. Herzog.
\newblock {Gamma calculus beyond Villani and explicit convergence estimates for Langevin dynamics with singular potentials}.
\newblock {\em	arXiv:1907.03092}, 2019.

\bibitem{C1}
E. Carlen. 
\newblock{Superadditivity of Fisher's information and logarithmic Sobolev inequalities}
\newblock{\em Journal of Functional Analysis}, vol. 101, no. 1, pp. 194--211, 1991. 

 \bibitem{CG2017}
P. Cattiaux and A. Guillin.
\newblock {Hitting times, functional inequalities, Lyapunov conditions and uniform ergodicity}.
\newblock {\em Journal of Functional Analysis}, 272(6), 2361-2391, 2017.
  

\bibitem{non-reversible}
A. B. Duncan, G. A. Pavliotis, K. C. Zygalakis.
\newblock{non-reversible Langevin Samplers: Splitting Schemes, Analysis and Implementation.}
\newblock{\em arXiv:1701.04247}, 2017.

\bibitem{FL}
Q.~Feng and W.~Li.
\newblock{Generalized Gamma z calculus via sub-Riemannian density manifold.}
\newblock{\em arXiv:1910.07480}, 2020.

\bibitem{FJ}
J.~Fontbona and B.~Jourdain.
\newblock{A trajectorial interpretation of the dissipations of entropy and Fisher information for stochastic differential equations.}
\newblock{\em Annals of probability}, 131-170, 2016. 



\bibitem{Gross}
L. Gross. 
\newblock Logarithmic sobolev inequalities.
\newblock {\em American Journal of Mathematics,} 97(4), 1061--1083, 1975.

\bibitem{Guillin}
A. Guillin and  P. Monmarch\'e.
\newblock{Optimal linear drift for the speed of convergence of an hypoelliptic diffusion.}
\newblock{Electron. Commun. Probab.,} 21, no. 74, 1–14, 2016.


\bibitem{Hwang1993}
C.R. Hwang, S.Y. Hwang-Ma and S.J. Sheu. 
\newblock{Accelerating Gaussian diffusions.}
\newblock {Ann. Appl. Probab.,} 3(3):897–913, 1993.

\bibitem{Hwang2005}
C.R. Hwang, S.Y. Hwang-Ma and S.J. Sheu.
\newblock{Accelerating diffusions.}
\newblock{Ann. Appl. Probab.,} 15(2):1433–1444, 2005.


\bibitem{K}
C. Ketterer. 
\newblock{Lagrangian calculus for nonsymmetric diffusion operators. }
\newblock{\em Advances in Calculus of Variations}, 13(4), 2018. 
\bibitem{KM2012}
I. Kontoyiannie and S.P. Meyn, 
\newblock Geometric ergodicity and the specrtral gap of non-reversible Markov chains.
\newblock {\em Probab. Theory Relat. Fields, 154: 327-339}, 2012.

\bibitem{LNP2013}
T. Leli\'evre, F. Nier, and G. A. Pavliotis.
\newblock{Optimal non-reversible linear drift for the convergence
to equilibrium of a diffusion.}
\newblock{Journal of Statistical Physics,} 152(2):237–274, 2013.


\bibitem{Legoll}
F. Legoll, T. Leli{\`e}vre and U. Sharma.
\newblock{Effective dynamics for non-reversible stochastic differential equations: a quantitative study}
\newblock{Nonlinearity,} Vol 32(12), p. 4779, 2019.


\bibitem{Li2018_geometrya}
W. Li.
\newblock{Transport information geometry: Riemannian calculus on probability simplex.}
\newblock {\em arXiv:1803.06360}, 2018.

\bibitem{LiG2}
W. Li.
\newblock Diffusion Hypercontractivity via Generalized Density Manifold.
\newblock {\em 	arXiv:1907.12546 [math]}, 2019.

\bibitem{Luc}
L. Rey-Bellet.
\newblock Ergodic properties of Markov processes.
\newblock Open quantum systems. II, Lecture Notes in Math., vol. 1881, Springer, Berlin, 2006, pp. 1–39.

\bibitem{MV}
P. A. Markowich, C. Villani.
\newblock{On The Trend To Equilibrium For The Fokker-Planck Equation: An Interplay Between Physics And Functional Analysis.} 
\newblock{\em Physics and Functional Analysis}, Matematica Contemporanea, 1999. 

\bibitem{MT}
S. P. Meyn and R. L. Tweedie. 
\newblock{A survey of Foster-Lyapunov techniques for general state space Markov processes.}
\newblock{\em Proceedings of the Workshop on Stochastic Stability and Stochastic Stabilization}, Metz, France. 1993.


\bibitem{MO}
P. Monmarch\'e. 
\newblock{Almost sure contraction for diffusions on $\mathbb R^d$. Application to generalised Langevin diffusions.}
\newblock{arXiv:2009.10828v4}, 2020.

\bibitem{otto2001}
F.~Otto.
\newblock The geometry of dissipative evolution equations: the porous medium
  equation.
\newblock {\em Communications in Partial Differential Equations},
  26(1-2):101--174, 2001.

\bibitem{OV}
F.~Otto and C.~Villani. 
\newblock{Generalization of an inequality by Talagrand and links with the logarithmic Sobolev inequality.}
\newblock{\em Journal of Functional Analysis}, 173 (2): 361--400, 2000.

\bibitem{Pav}
G. A. Pavliotis. 
\newblock{Stochastic processes and applications: diffusion processes, the Fokker-Planck and Langevin equations.}
\newblock{\em Springer, New York}, Vol. 60, 2014.

\bibitem{Villani2009_optimal}
C.~Villani.
\newblock {\em Optimal Transport: Old and New}, 2009.
 
\end{thebibliography}
\end{document}